\newtheorem{theorem}{Theorem}[section]
\newtheorem{definition}[theorem]{Definition}
\newtheorem{notation}[theorem]{Notation}
\newtheorem{proposition}[theorem]{Proposition}
\newtheorem{corollary}[theorem]{Corollary}
\newtheorem{lemma}[theorem]{Lemma}
\newtheorem{remark}[theorem]{Remark}
\newtheorem*{acknowledgement}{Acknowledgement}
\numberwithin{equation}{section}
\newcommand{\R}{\mathbb{R}}  
\newcommand{\E}{\mathbb{E}} 
\newcommand{\Prob}{\mathbb{P}}
\newcommand{\Q}{\mathbb{Q}}
\newcommand{\Hei}{\mathbb{H}}
\newcommand{\WR}{W_0 \left(\mathbb{R}^2\right)}
\newcommand{\WH}{W_0 \left( \mathbb{H} \right)}
\DeclareMathOperator{\Span}{span}
\begin{document}


\title[Support Hypoelliptic Heisenberg group]{On the Support of a hypoelliptic diffusion on the Heisenberg group}

\author[Carfagnini]{Marco Carfagnini{$^{\dag }$}}
\thanks{\footnotemark {$\dag$} Research was supported in part by NSF Grants DMS-1712427 and DMS-1954264. }
\thanks{ \textit{Data availability statement:} no datasets were generated or analyzed during the current study.}
\address{ Department of Mathematics\\
University of California San Diego\\
La Jolla, CA 92093-0112,  U.S.A.}
\email{mcarfagnini@ucsd.edu}

\keywords{Diffusion processes, Wiener measure, Heisenberg group, hypoelliptic operator}

\subjclass{Primary 58J65, 60H10; Secondary 60J60, 60H05 }


\begin{abstract} 
We provide an elementary proof of the support of the law of a hypoelliptic Brownian motion on the Heisenberg group $\Hei$. We consider a control norm associated to left-invariant vector fields on $\Hei$, and describe the support in terms of the space of finite energy horizontal curves. 
\end{abstract}

\maketitle

\tableofcontents

\renewcommand{\contentsname}{Table of Contents}

\maketitle

\tableofcontents

\section{Introduction}\label{Intro}

The purpose of this paper is to describe the support of the law of a hypoellptic diffusion $g_{t}$ on the Heisenberg group $\Hei$. The novelty of this paper is the norm on the path space $\WH$ of $\Hei$-valued continuous curves starting at the identity that is used for the support. The group $\Hei$ is the simplest example of a Carnot group and it comes with a natural left-invariant distance, the Carnot-Carath\'{e}odory distance $d_{cc}$. This is the control distance associated to the left-invariant vector fields on $\Hei$, see Definition \ref{Dfn.2.2}. The corresponding control norm $d_{c}$  is defined as $d_{c}(x):= d_{cc} (x,e)$, where $e\in \Hei$ is the identity.  Our main result is to prove a support theorem for the hypoelliptic Brownian motion $g_{t}$ with respect to the norm $\max_{0\leqslant t \leqslant 1} d_{c} (\gamma (t))$ for $\gamma \in \WH$. As pointed out in Remark \ref{r.norms} it is sufficient to describe the support  with respect to some equivalent norms. 

The support of a diffusion was first studied by Stroock and Varadhan \cite{StroockVaradhan1972}, which we now describe briefly. Let $X_{t}$ be an $\R^{d}$-valued diffusion which is solution to the stochastic differential equation 
\begin{equation}\label{e.SDE.strat}
dX_t = \sigma \left( t, X_t \right)\circ dW_t + b\left( t,X_t \right)dt, \quad X_0=0,
\end{equation}
where $\sigma (t,x)$ is a $d\times \ell$ matrix whose entries are functions of $(t,x)\in [0,1] \times \R^d$, and $b(t,x)$ is a vector in $\R^d$, and $ W_{t}$ is an $\ell$-dimensional Brownian motion, and  $\circ d$ denotes the stochastic differential in Stratonovich's sense. We can view  the process $\{X_{t} \}_{0\leqslant t \leqslant 1}$ as a $W_{0}(\R^{d})$-valued random variable, where $W_{0}(\R^{d})$ is the space of $\R^d$-valued continuous paths starting at zero. Let $\mu$ be the law of  $\{X_{t} \}_{0\leqslant t \leqslant 1}$ and $\mathcal{S}_{\mu}$ its support. If $H$ denotes the subset of $W_{0}(\R^{d})$ consisting of absolutely continuous paths, then to any $\phi\in H$ one can associate a deterministic path $x_\phi $ as being the solution to the ordinary differential equation 
\begin{align}\label{e.controlled.system}
& x^\prime_{\phi} (t)  = \sigma \left( t, x_\phi (t) \right) \phi^\prime (t) dt + b\left( t, x_\phi (t) \right) dt, 
\\
&x_\phi(0) = 0. \notag
\end{align}
We follow \cite{Kunita1978} and refer to solutions to \eqref{e.controlled.system} as controlled systems. Then 
\begin{equation}\label{e.support}
\mathcal{S}_\mu = \overline{ \left\{ x_\phi, \; \phi \in H \right\} }^{\infty},
\end{equation}
where the closure is taken in the uniform topology in $W_{0}(\R^d)$. 

Note that the hypoelliptic Brownian motion $g_{t}$ can be viewed as an $\R^{3}$-valued stochastic process. This is not a Gaussian process and it satisfies the stochastic differential equation 
\begin{align*}
\begin{pmatrix}
dg_{1}(t)
\\
dg_{2} (t)
\\
dg_{3} (t) 
\end{pmatrix} 
=
\begin{pmatrix}
1 & 0
\\
0 & 1
\\
- \frac{1}{2}  B_{2}(t) & \frac{1}{2} B_{1}(t) 
\end{pmatrix} 
\begin{pmatrix}
dB_{1}(t) 
\\
dB_{2} (t) 
\end{pmatrix} 
, \quad g_{0} = (0,0,0).
\end{align*}
Stroock-Varadhan original support theorem \eqref{e.support} was proven under the assumption that $\sigma$ is $C^{2}$ in space and $C^{1}$ in time, bounded together with its partial derivatives of order one and two, and $b$ is globally Lipschitz and bounded.  In a series of papers by Gy\"{o}ngy   \cite{Gyongy1988, Gyongy1988b, Gyongy1994}, and by Gy\"{o}ngy-Pr\"{o}hle \cite{GyongyProhle1990} \eqref{e.support} is proven for processes driven by continuous semi-martingales under milder assumptions on $\sigma$ and $b$. In particular, \eqref{e.support} for the law of $\{g_{t}\}_{0\leqslant t \leqslant 1}$ with respect to the uniform norm $\max_{0\leqslant t \leqslant 1} \vert \gamma (t) \vert_{\R^{3}}$ on $W_{0} ( \R^{3})$ follows from  \cite[Theorem 3.1]{GyongyProhle1990}.  Moreover, \eqref{e.support} for the hypoelliptic Brownian motion can also be proven by rough path theory and continuity of the Lyons-It\^o map, that is, the solution map of a rough differential equation \cite[Section 10.3, Section 13.7]{FrizVictoirBook}. In the current paper we prove \eqref{e.support} for the hypoelliptic Brownian motion on the Heisenberg group. Differently from \cite{GyongyProhle1990}, we replace the Euclidean norm in $\R^{3}$ by the control norm $d_{c}$, which is a more natural norm and it is consistent with the geometry of the Heisenberg group. Our proof does not rely on rough path theory and it is based on a time change argument. 

We mention that \eqref{e.support} for diffusion processes on Hilbert spaces is proven  in \cite{Aida1990, Gyongy1989}. We also mention that in \cite{LedouxQianZhang2002} a rough paths approach  is used, and a support theorem in the $p$-variational topology is proven. 

One can ask under what condition the closure in \eqref{e.support} coincides  with the whole path space $W_{0}(\R^{d})$. This question has been addressed in \cite{Kunita1978}, where the author gives nearly necessary and sufficient  conditions for 
\begin{equation}\label{e.kunita}
W_{0}(\R^{d}) = \overline{ \left\{ x_\phi, \; \phi \in H \right\} }^\infty
\end{equation}
to hold. We prove \eqref{e.kunita} for the hypoelliptic Brownian motion. Our proof is explicit and it relies on the group structure of $\Hei$. 

The main results of this paper are contained in Theorem \ref{Thm.Support}, where we prove \eqref{e.support} and \eqref{e.kunita} for the hypoelliptic Brownian motion on the Heisenberg group. More precisely, if $H \left( \Hei \right)$  denotes the set of finite energy horizontal curves, then we prove that 
\vspace{-0.2cm}
\begin{equation}\label{eqn.main}
\WH = \overline{H (\Hei )}^{d_{c}} = \mathcal{S}_{\mu},
\end{equation}
where $\mu$ is the law of $\{g_{t} \}_{0 \leqslant t \leqslant 1}$ and the closure is taken with respect to the norm $\max_{0\leqslant t \leqslant 1} d_{c} (\gamma (t) )$ for $\gamma \in \WH$.
\vspace{-0.1cm}

First, we show that $\mathcal{S}_{\mu} \subset \overline{H (\Hei )}^{d_{c}}$ by constructing a family of stochastic processes $\{ g_{\delta}\}_{\delta >0}$ that approximates $g$ in the sense that the law $\mu_{\delta}$ of $g_{\delta}$ converges weakly to the law $\mu$ of $g$. We further study relations between the measures $\mu_{\delta}$  and $\mu$, and prove that they are singular. In particular we show that the space $H(\Hei)$ of finite energy horizontal curves has $\mu$-measure zero. The space $H(\Hei)$ can then be viewed as a Cameron-Martin space in a non-Gaussian setting by \eqref{eqn.main} and since  $\mu (H (\Hei ) ) =0$.  We then prove that  $\overline{H (\Hei )}^{d_{c}} \subset \mathcal{S}_{\mu}$ using Theorem \ref{thm.estimate} and the explicit form of the process $g_{t}$.  Namely, $g_{t} = \left( B_{t}, A_{t} \right) $, where $B_{t}$ is a two-dimensional standard Brownian motion and $A_{t}$ is the corresponding L\'{e}vy's stochastic area. Our proof relies on the classical identity $A_{t} = b_{\tau (t)}$, where $b_{t}$ is a one-dimensional standard Brownian motion independent of $B_{t}$, and $\tau (t)$ is a stopping time. 

The paper is organized as follows. In Section \ref{sec2} we describe the Heisenberg group $\Hei$, and the corresponding control norm and hypoelliptic Brownian motion, and state the main result of the paper Theorem \ref{Thm.Support}. Section \ref{sec3} then contains the proof of Theorem \ref{Thm.Support}.

\section{The setting and the main result}\label{sec2}
\subsection{Heisenberg group as Lie group}\label{subsec3.1}
The Heisenberg group $\Hei$ as a set is  $\R^3\cong \mathbb{R}^{2} \times \mathbb{R}$ with the group multiplication  given by
\begin{align*}
& \left( \mathbf{v}_{1}, z_{1} \right) \cdot \left( \mathbf{v}_{2}, z_{2} \right) := \left( x_{1}+x_{2}, y_{1}+y_{2}, z_{1}+z_{2} + \frac{1}{2}\omega\left( \mathbf{v}_{1}, \mathbf{v}_{2} \right)\right),
\\
& \text{ where } \mathbf{v}_{1}=\left( x_{1}, y_{1} \right), \mathbf{v}_{2}=\left( x_{2}, y_{2} \right) \in \mathbb{R}^{2}, \; \text{ and }
\\
& \omega: \mathbb{R}^{2} \times \mathbb{R}^{2} \longrightarrow \mathbb{R}, \quad \omega\left( \mathbf{v}_{1}, \mathbf{v}_{2} \right):= x_{1}y_{2}-x_{2} y_{1}.
\end{align*}
The identity in $\Hei$ is $e=(0, 0, 0)$ and the inverse is given by $\left( \mathbf{v}, z \right)^{-1}= (-\mathbf{v},-z)$. The Lie algebra of $\Hei$ can be identified with the space  $\R^3\cong \mathbb{R}^{2} \times \mathbb{R}$  with the Lie bracket defined by
\[
\left[ \left( \mathbf{a}_{1}, c_{1} \right), \left( \mathbf{a}_{2}, c_{2} \right)  \right] = \left(0,\omega\left( \mathbf{a}_{1}, \mathbf{a}_{2} \right)  \right).
\]
The set $\R^3\cong \mathbb{R}^{2} \times \mathbb{R}$ with this Lie algebra structure will be denoted by $\mathfrak{h} $.

Let us now recall some basic notation for Lie groups. Suppose $G$ is a Lie group, then the left  and right multiplication by an element $k\in G$ are denoted by
\begin{align*}
L_{k}: G \longrightarrow G, &  & g \longmapsto k^{-1}g,
\\
R_{k}: G \longrightarrow G, &  & g \longmapsto gk.
\end{align*}

Recall that  the tangent space $T_{e}G$ can be identified with the Lie algebra $\mathfrak{g}$ of left-invariant vector fields on $G$, that is, vector fields $X$ on $G$  such that $dL_{k} \circ X=X \circ L_{k}$, where $dL_{k}$ is the differential of $L_k$. More precisely, if $A$ is a vector in $T_{e}G$, then we denote by $\tilde{A}\in \mathfrak{g}$ the (unique) left-invariant vector field such that $\tilde{A} (e) = A$.  A left-invariant vector field is determined by its value at the identity, namely, $\tilde{A}\left( k \right)=dL_{k} \circ\tilde{A}\left( e \right)$.

For the Heisenberg group the differential of left and right multiplication can be described explicitly as follows.

\begin{proposition}\label{p.Differentials}
Let $k= (k_1, k_2, k_3) = (\mathbf{k}, k_3 )$ and $g= (g_1, g_2, g_3) = (\mathbf{g}, g_3 )$ be two elements in $\Hei$. Then, for every $v= \left( v_1, v_2, v_3 \right) = (\mathbf{v}, v_3 )$ in  $T_g\Hei$, the differentials of the left and right multiplication are given by
\begin{align}\label{LeftRightMultDiff}
& dL_{k}: T_g\Hei \longrightarrow T_{k^{-1}g}\Hei,  \notag
\\
& dR_{k}: T_g\Hei \longrightarrow T_{gk}\Hei, \notag
\\
& dL_{k} (v) =  \left( v_1, v_2, v_3 + \frac{1}{2} \omega( \mathbf{v}, \mathbf{k}) \right), \notag \\
& dR_{k} (v) =  \left( v_1, v_2, v_3 + \frac{1}{2} \omega( \mathbf{v}, \mathbf{k}) \right).
\end{align}
\end{proposition}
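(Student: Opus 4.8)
The plan is to exploit the fact that, because the symplectic form $\omega$ is bilinear, for each fixed $k \in \Hei$ both maps $L_k$ and $R_k$ are \emph{affine} self-maps of $\R^3$. Writing the group law in coordinates, the first two components of a product depend affinely (in fact by a translation) on each factor, while the third component is the sum of the two third coordinates with the correction $\frac{1}{2}\omega$, which is linear in each argument separately. For an affine map $F(g) = Mg + c$ of $\R^3$, the differential at any point $g$ is the constant linear part $M$; thus I never need to vary the base point, and the resulting formulas for $dL_k$ and $dR_k$ will be independent of $g$, exactly as in \eqref{LeftRightMultDiff}.

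First I would treat $R_k$ explicitly. Using $R_k(g) = gk = (\mathbf{g} + \mathbf{k}, g_3 + k_3 + \frac{1}{2}\omega(\mathbf{g}, \mathbf{k}))$, I read off that the component functions of $R_k$ are $g_1 + k_1$, $g_2 + k_2$, and $g_3 + k_3 + \frac{1}{2}\omega(\mathbf{g}, \mathbf{k})$. Differentiating in $g$ (equivalently, extracting the part linear in $g$ and replacing $\mathbf{g}$ by the tangent vector $\mathbf{v}$), the first two rows of the Jacobian are those of the identity, while the third contributes $\frac{1}{2}\omega(\mathbf{v}, \mathbf{k})$. This yields $dR_k(v) = (v_1, v_2, v_3 + \frac{1}{2}\omega(\mathbf{v}, \mathbf{k}))$.

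For $L_k$ I proceed analogously, but must first substitute $k^{-1} = (-\mathbf{k}, -k_3)$. Then $L_k(g) = k^{-1}g = (\mathbf{g} - \mathbf{k}, g_3 - k_3 + \frac{1}{2}\omega(-\mathbf{k}, \mathbf{g}))$, whose third component involves $\omega(-\mathbf{k}, \mathbf{g})$. The only genuinely nonroutine point in the whole argument is to track the sign here: by antisymmetry of $\omega$ one has $\omega(-\mathbf{k}, \mathbf{g}) = \omega(\mathbf{g}, \mathbf{k})$, so its linear-in-$g$ part, after replacing $\mathbf{g}$ by $\mathbf{v}$, is again $\omega(\mathbf{v}, \mathbf{k})$. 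Hence $dL_k(v) = (v_1, v_2, v_3 + \frac{1}{2}\omega(\mathbf{v}, \mathbf{k}))$, which coincides with $dR_k(v)$ and completes the proof. I would also remark that the equality $dL_k = dR_k$ reflects the fact that the nonabelian part of the group law lives entirely in the central (third) direction, where the quadratic correction is symmetric under exchanging the roles of the two factors after accounting for the inverse.
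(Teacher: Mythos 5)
Your proof is correct: the paper states this proposition without proof, as it is a routine coordinate computation, and your argument—observing that $L_k(g)=k^{-1}g$ and $R_k(g)=gk$ are affine in $g$, so their differentials are just their linear parts, with the sign in $\omega(-\mathbf{k},\mathbf{g})=\omega(\mathbf{g},\mathbf{k})$ handled by bilinearity and antisymmetry—is exactly the intended verification, correctly following the paper's convention that $L_k$ denotes multiplication by $k^{-1}$ on the left.
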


\subsection{Heisenberg group as a sub-Riemannian manifold}
The Heisenberg group $\Hei$ is the simplest non-trivial example of a sub-Riemannian manifold.
We define $X$, $Y$ and $Z$ as the unique left-invariant vector fields satisfying $X_e = \partial_x$, $Y_e = \partial_y$ and $Z_e = \partial_z$, that is,
\begin{align*}
& X = \partial_x - \frac{1}{2}y\partial_z,
\\
& Y = \partial_y + \frac{1}{2}x\partial_z,
\\
&  Z = \partial_z.
\end{align*}
Note that the only non-zero Lie bracket for these left-invariant vector fields is $[X, Y]=Z$, so the vector fields $\left\{ X, Y \right\}$ satisfy H\"{o}rmander's condition. We define the \emph{horizontal distribution} as $\mathcal{H}:= \Span \left\{  X, Y \right\}$ fiberwise, thus making $\mathcal{H}$ a sub-bundle in the tangent bundle $T\Hei$. To finish the description of the Heisenberg group as a sub-Riemannian manifold we need to equip the horizontal distribution  $\mathcal{H}$ with an inner product. For any $p \in \Hei$ we define the inner product $\langle \cdot , \cdot \rangle_{\mathcal{H}_{p}}$ on $\mathcal{H}_{p}$ so that $\left\{ X \left( p \right), Y \left( p \right) \right\}$ is an orthonormal (horizontal) frame at any $p \in \Hei$. Vectors in $\mathcal{H}_{p}$ will be called \emph{horizontal}, and the corresponding norm will be denoted by $\Vert \cdot \Vert_{\mathcal{H}_{p}}$.

In addition, H\"{o}rmander's condition ensures that a natural sub-Laplacian on the Heisenberg group

\begin{equation}\label{e.2.1}
\Delta_{\mathcal{H}} =  X ^2 + Y ^2
\end{equation}
is a hypoelliptic operator by \cite{Hormander1967a}. We recall now another important object in sub-Riemannian geometry, namely, horizontal curves.

\begin{notation}\label{bold}
A curve $\gamma(t) = \left( x\left( t \right), y\left( t \right), z\left( t \right) \right)$ in $\Hei$ will be denoted by $\left( \mathbf{x}\left( t \right), z\left( t \right) \right)$, and its corresponding tangent vector $\gamma^\prime(t)$ in $T\Hei_{\gamma(t)} $ will be denoted by
\[
\gamma^\prime (t) = \left( x^\prime\left( t \right), y^\prime\left( t \right), z^\prime\left( t \right) \right)=\left( \bm{x}^\prime\left( t \right), z^\prime\left( t \right) \right).
\]
\end{notation}

\begin{definition}\label{Dfn.2.1}
An absolutely continuous path $t \longmapsto \gamma (t) \in \Hei$, for a.e. $t \in [0,1]$ is said to be horizontal if $\gamma^{\prime}(t)\in\mathcal{H}_{\gamma(t)}$ for a.e. $t$, that is, the tangent vector to $\gamma\left(t\right)$ at every point $\gamma \left(t\right)$ is horizontal. Equivalently we can say that $\gamma$ is horizontal if  $c_{\gamma}\left( t \right):=dL_{\gamma\left( t \right)}\left( \gamma^{\prime}(t)\right) \in \mathcal{H}_{e}$ for a.e. $t$.
\end{definition}
Note that for $\gamma(t) = \left( \mathbf{x}\left( t \right), z\left( t \right) \right)$ we have
\begin{align}\label{e.MaurerCartan}
& c_{\gamma}\left( t \right)=dL_{\gamma\left( t \right)}\left( \gamma^{\prime}(t)\right)=\left( \mathbf{x}^\prime\left( t\right), z^{\prime}\left( t \right) -\frac{1}{2}\omega( \mathbf{x}\left( t \right), \mathbf{x}^{\prime}\left( t \right) )\right), 
\end{align}
where we used Proposition \ref{p.Differentials}. Equation \eqref{e.MaurerCartan} can be used  to characterize horizontal curves in terms of the components as follows. The curve $\gamma$ is horizontal if and only if, for a.e. $0\leqslant t \leqslant 1$
\begin{equation}\label{e.horizontal}
z^{\prime}(t) -\frac{1}{2}\omega( \mathbf{x}\left( t \right), \mathbf{x}^{\prime}\left( t \right) ))=0.
\end{equation}

\begin{definition}\label{dfn.finite.energy.path}
We say that a horizontal curve $t \longmapsto \gamma (t) \in \Hei, \, 0\leqslant  t\leqslant 1$ has finite energy if
\begin{equation}\label{e.finite.energy}
\Vert \gamma \Vert_{H\left( \Hei \right)}^{2}
:= \int_0^1 \vert c_\gamma \left( s \right) \vert^2_{\mathcal{H}_e}  ds=\int_0^1 \vert dL_{\gamma(s)} \left( \gamma^\prime(s) \right)\vert^2_{\mathcal{H}_{e}} ds < \infty.
\end{equation}
\end{definition}
We denote  by $H\left( \Hei \right)$ the space of finite energy horizontal curves starting at the identity.
The inner product corresponding to the norm $\Vert \cdot \Vert_{H\left( \Hei \right)}$ is denoted by $\langle \cdot, \cdot \rangle_{H\left( \Hei \right)}$.  Note that the Heisenberg group as a sub-Riemannian manifold comes with a natural left-invariant distance.

\begin{definition}\label{Dfn.2.2}
For any $x, y \in \Hei$ the Carnot-Carath\'{e}odory distance is defined as
\begin{align*}
d_{cc} (x, y):= &\inf \left\{  \int_0^1  \vert c_\gamma \left( s \right) \vert_{\mathcal{H}_e} ds , \right.
\\
& \left. \gamma : [0,1] \longrightarrow \Hei, \gamma(0)=x, \gamma(1)=y,  \gamma  \text{ is horizontal}  \right\}.
\end{align*}
\end{definition}
Another consequence of  H\"{o}rmander's condition for left-invariant vector fields $X$, $Y$ and $Z$ is that we can apply the Chow–Rashevskii theorem. As a result, given two points in $\Hei$ there exists a horizontal curve connecting them, and therefore the Carnot-Carath\'{e}odory distance is finite on $\Hei$. The Carnot-Carath\'{e}odory distance in Definition \ref{Dfn.2.2} is the control distance associated to the vector fields $X,Y$, and $Z$ on $\Hei$ \cite[Definition 5.2.2]{BonfiglioliLanconelliUguzzoniBook}. The control norm $d_{c} : \Hei \rightarrow \R$ is then defined as $d_{c} (x) = d_{cc} (x,e)$. Note that 
\begin{equation}\label{eqn.left.invariance.distance}
d_{c} ( y^{-1}x) = d_{cc} (x,y),
\end{equation}
by left-invariance of $X,Y$, and $Z$, and the definition of $d_{cc}$. The control norm is an example of a homogeneous norm. 

\begin{definition}\label{def.hom.norm}
Let $\rho : \Hei \rightarrow [0,\infty)$ be a continuous function with respect to the Euclidean topology. Then $\rho$ is a homogeneous norm if it satisfies the following properties
\begin{align*}
& \rho (\delta_{\lambda} (x) ) = \lambda \rho (x), \text{ for every } \lambda >0, \text{ and } x \in \Hei,
\\
& \rho (x) = 0 \text{ if and only if } x=e,
\end{align*}
where $\delta_{\lambda} (x):= \left( \lambda x_{1}, \lambda x_{2} , \lambda^{2} x_{3} \right)$.
\end{definition}

If $\rho_{1}$ and $\rho_{2}$ are two homogeneous norms, then there exists a constant $c>0$ such that 
\begin{equation}\label{e.DistEquivalence}
c^{-1} \rho_{1} (x) \leqslant \rho_{2} (x) \leqslant c \rho_{1}(x) ,
\end{equation}
for every $x\in \Hei$, \cite[Proposition 5.1.4]{BonfiglioliLanconelliUguzzoniBook}. We consider the following homogeneous norm 
\begin{equation}\label{hom.norm}
\vert x \vert:= \left(  (x_{1}^{2} +x_{2}^{2})^{2} + x_{3}^{2} \right)^{\frac{1}{4}},
\end{equation}
for every $x= (x_{1}, x_{2}, x_{3} )\in \Hei$. By \eqref{eqn.left.invariance.distance} and \eqref{e.DistEquivalence} it follows that 
\begin{align*}
c^{-1}  d_{cc} (x,y) \leqslant \vert y^{-1} x \vert  \leqslant c d_{cc} (x,y),
\end{align*} 
for any $x,y \in \Hei$.

Finally, we need to describe  a hypoelliptic Brownian motion with values in $\Hei$. 

\begin{definition}\label{d.HeisenbergBM} 
An $\Hei$-valued Markov process $g_{t}$  is called a hypoelliptic Brownian motion if its generator is the sub-Laplacian $\frac{1}{2}\Delta_{\mathcal{H}}$ defined by Equation \eqref{e.2.1}.
\end{definition}
One can write a stochastic differential equation for $g_{t}$. This form is the standard stochastic differential equation for a Lie group-valued Brownian motion, namely,
\begin{align*}
&dL_{g_{t}}\left( dg_{t} \right)=(dB_{1}(t), dB_{2} (t), 0),
\\
& g_{0}=e,
\end{align*}
where $B_{t} = ( B_{1} (t), B_{2}(t))$ is a standard two-dimensional Brownian motion. An explicit solution is given by 
\begin{equation}\label{e.HypoBM}
g_{t}:=\left( B_{t}, A_{t} \right),
\end{equation}
where $A_t := \frac{1}{2} \int_0^t \omega\left( B_{s}, dB_{s}\right)$ is the Levy's stochastic area. Note that we used the It\^{o} integral rather than the Stratonovich integral. However, these two integrals are equal since the symplectic form $\omega$ is skew-symmetric, and therefore  L\'{e}vy's stochastic area functional is the same for both integrals.

\begin{notation}\rm\label{ProbabilisticSetting}
Throughout the paper we fix a filtered probability space $\left( \Omega, \mathcal{F}, \mathcal{F}_{t}, \mathbb{P}\right)$. We denote the expectation under $\Prob$  by $\E$.
\end{notation}

\subsection{The Wiener meaure} 
We recall here the definition of Wiener measure, and collect some notations that will be used throughout the paper. 
\begin{notation}[Topology on $\Hei$]
Let $\vert \cdot \vert$ be the homogeneous norm in \eqref{hom.norm}. We consider the topology on $\Hei$ whose open balls centered at the identity   are $\left\{ x\in\Hei, \,\: \vert x\vert < r  \right\} $.
\end{notation}

Note that by \eqref{e.DistEquivalence} all homogeneous norms induce the same topology.

\begin{notation}[Standard Wiener space] We denote by $W_{0}\left( \mathbb{R}^{n} \right)$ the space of $\mathbb{R}^{n}$-valued continuous functions on $[0,1]$ starting at $0$. This space comes with the norm

\[
\Vert h \Vert_{W_0 \left(\R^n\right)}:=\max_{0\leqslant t \leqslant 1} \vert h (t) \vert_{\R^n}, \quad h\in W_0 \left( \R^n \right),
\]
and the associated distance $d_{W_0 \left(\R^n\right)} (h,k) =\max_{0 \leqslant t\leqslant 1} \vert h (t) - k(t) \vert_{\R^n}   $, where $\vert \cdot \vert_{\R^n} $ is the Euclidean norm. 

\end{notation}

\begin{definition}[Wiener space over $\Hei$]\label{d.wiener.space}
The Wiener space over $\Hei$, denoted by $\WH$, is the space of $\mathbb{H}$-valued continuous functions on $[0,1]$  starting at identity in $\mathbb{H}$.
\end{definition}
Once a homogeneous norm $\rho$ on $\Hei$ is fixed, one can introduce a topology on $\WH$ in the following way. We endow $W_0 \left( \Hei \right)$ with the following norm
\[
\Vert \eta \Vert_{\rho}:=\max_{0\leqslant t \leqslant 1} \rho ( \eta (t) ), \quad \eta \in W_0 \left( \Hei \right),
\]
and the associated distance is $\max_{ 0 \leqslant t \leqslant 1 } \rho ( \eta(t)^{-1} \gamma (t) )$ for any $\eta, \gamma \in W_0 \left( \Hei \right)$.

\begin{definition}\label{d.hypo.wiener.measure} 
Let $W_0 \left( \Hei \right)$ be the Wiener space over $\Hei$, and $\{g_{t}\}_{0\leqslant t \leqslant 1}$ be the hypoelliptic Brownian motion defined by  \eqref{e.HypoBM}. We call its law the horizontal Wiener measure and we denote it by $\mu$.
\end{definition}
The process $g_t$ can be viewed as a $\WH$-valued random variable, that is,
\begin{align*}
&g\, : \Omega \longrightarrow \WH, \qquad \omega \mapsto \left\{ t \mapsto g_t(\omega) \right\}.
\end{align*}
The measure $\mu$ is then given by  $\mu (E) = \Prob \left( g^{-1} (E) \right) = \Prob \left( g \in E \right) $ for any Borel set $E$ in $\WH$.
We denote the support of $\mu$ by $\mathcal{S}_{\mu} $, that is, $\mathcal{S}_{\mu}$ is the smallest closed subset of $\WH$ having $\mu$-measure one.

\begin{remark}\label{r.not.banach.space}
Note that even though the hypoelliptic Brownian motion $g_t$ is an $\R^3$-valued stochastic process, it is not a Gaussian process, and its law $\mu$ is not a Gaussian measure on $\WH$. Moreover, contrary to the Euclidean case, the space $\WH$ is not a Banach space. It is easy to see that the space $\WH$ is closed under the norm $ \max_{0 \leqslant t \leqslant 1} \rho (\gamma (t) )$ for $\gamma \in \WH$, where $\rho$ is a homogeneous norm on $\Hei$, but $\WH$ is not a linear space.
\end{remark} 

Let us denote by $\pi$ the projection 
\begin{equation}\label{eqn.projection}
\pi : \WH \longrightarrow W_{0} ( \R^{2}), \; \pi (\gamma) = ( \gamma_{1} , \gamma_{2}),
\end{equation}
for any $\gamma = (\gamma_{1} , \gamma_{2} , \gamma_{3} ) \in \WH$.

\begin{remark}\label{r.explanation.name}
Let $\phi = \left( \phi_1, \phi_2, \phi_3 \right) \in H\left( \Hei\right)$ be a finite energy horizontal curve as in Definition \ref{dfn.finite.energy.path}. Then $\pi(\phi)$ is in the Cameron-Martin space on $\R^{2}$, that is, $\pi(\phi)$ is an absolutely continuous $\R^{2}$-valued curve starting at zero such that 
\[
\int_0^1 \vert \pi( \phi)^{\prime} (s) \vert_{\R^2 }^2ds < \infty.
\]
\end{remark}

\subsection{Main result} 
Now we have all the ingredients needed to state the main result of this paper, that is, we describe the support of the Wiener measure for the hypoelliptic Brownian motion $g_t$ in terms of horizontal paths.

\begin{theorem}\label{Thm.Support}
Let $W_0 \left( \Hei \right)$ be the Wiener space over $\Hei$, and $\mu$ be the horizontal Wiener measure on $\WH$, and $H\left( \Hei \right)$ be  the space of horizontal curves with finite energy. Then 
\[
\mathcal{S}_{\mu} = \overline{H\left( \Hei\right) }^{d_{c} } = \WH,
\]
where the closure is taken with respect to the norm $\max_{0 \leqslant t \leqslant 1} d_{c} (\gamma (t) )$, for $ \gamma \in \WH$, and $d_{c}$ is the control norm induced by the Carnot-Carath\'eodory distance.
\end{theorem}

\begin{remark}\label{r.norms}
It is enough to prove Theorem \ref{Thm.Support} for the homogeneous norm $\vert \cdot \vert$ given by \eqref{hom.norm}. Indeed, if $\rho_{1}$ and $\rho_{2}$ are two homogeneous norms on $\Hei$ and $\Vert \cdot \Vert_{\rho_{1}}$, $\Vert \cdot \Vert_{\rho_{2}}$ denote the corresponding norms on $\WH$, that is, 
\[ 
\Vert \gamma \Vert_{\rho_{i}} := \max_{0 \leqslant t \leqslant 1} \rho_{i} (\gamma (t) ), \quad \gamma \in \WH.
\]
Then 
\[
\overline{H\left( \Hei\right) }^{\rho_{1} }= \overline{H\left( \Hei\right) }^{\rho_{2} },
\]
since $\Vert \cdot \Vert_{\rho_{1}}$, $\Vert \cdot \Vert_{\rho_{2}}$ are equivalent by \eqref{e.DistEquivalence}, and hence Theorem \ref{Thm.Support} holds for any homogeneous norm as soon as it holds for one norm. 
\end{remark}

Let us denote by $\overline{H\left( \Hei\right) }$ the closure of $H\left( \Hei\right)$ with respect to the norm $\max_{0 \leqslant t \leqslant 1} \rho (\gamma (t) ), \; \gamma \in \WH$, and for $\rho$ a homogeneous norm on $\Hei$. By Remark \ref{r.norms}, $\overline{H\left( \Hei\right) }$ is independent of $\rho$.

\section{Proof of Theorem \ref{Thm.Support}}\label{sec3}
We will divide the proof of Theorem \ref{Thm.Support} in two steps. First, we introduce a family of processes that approximates $\{g_t\}_{0\leqslant t\leqslant 1}$. This is used in Corollary \ref{c.weak.convergence}  to show that the support $\mathcal{S}_{\mu}$ is contained in $\overline{H\left( \Hei\right) }$. The reverse inclusion is proven in Corollary \ref{c.reverse.inclusion}  which follows from Theorem \ref{thm.estimate}. In Proposition \ref{p.closure } we prove that $\overline{H\left( \Hei\right) }= \WH$, which concludes the proof of  Theorem \ref{Thm.Support}. 

\subsection{Approximation of the hypoelliptic Brownian motion} 
The aim of this step is to show that the support  $\mathcal{S}_\mu$ of the law of $\{g_t\}_{0\leqslant t \leqslant 1}$ is contained in $\overline{H\left( \Hei\right) }$. This will be accomplished by constructing a horizontal piecewise approximation $g_\delta(t)$ of $g_t$ such that $\mu_{\delta} \rightarrow \mu $ weakly, where $\mu_{\delta}$ is the law of $g_\delta(t)$. Different approximations of a Brownian motion have been extensively studied over the decades, see for example Wong-Zakai \cite{WongZakai1965}, Kunita \cite{Kunita1974},  Nakao-Yamamoto \cite{NakaoYamato1978}, Ikeda-Nakao-Yamato \cite{IkedaNakaoYamato1977},  and Ikeda-Watanabe \cite[Chapter 6, Section 7]{IkedaWatanabe1989} for more details. We are not able to refer to all the vast literature on the subject, but we mentioned some results which are closer and more relevant to the techniques we use in this paper. 

Let $\{ B_{\delta} \}_{\delta >0}$ be an approximation of a two-dimensional Brownian motion, that is, 
\begin{equation}\label{convergence.BM}
\E \left[ \max_{0 \leqslant t \leqslant 1} \vert B_{\delta} (t)-B_{t} \vert_{\R^2}^2 \right] \longrightarrow 0, \quad \text{as} \; \, \delta \to 0, 
\end{equation}
such that 
\begin{equation}\label{convergence.LevyArea}
\E \left[ \max_{0 \leqslant t \leqslant 1}  A_{\delta} (t)-A_{t} \vert_{\R}^2 \right] \longrightarrow 0 \quad \text{as} \; \, \delta \to 0,
\end{equation}
where 
\begin{equation}\label{approximation.LevyArea}
A_\delta(t) := \frac{1}{2} \int_0^t \left( B_{1,\delta} (s)B_{2,\delta}^\prime(s) -  B_{2,\delta} (s)B_{1,\delta}^\prime(s) \right)ds.
\end{equation}
Let $f_{1}$ and $f_{2}$ be differentiable functions on $[0,1]$ such that $f_i(0)=0$ and $f_i(1)=1$ for $i=1,2$. Set
\begin{equation}\label{approximationBM}
B_{i,\delta} (t):= B_i (k \delta ) + f_i \left( \frac{t-k\delta}{\delta} \right)  \left( B_i (k\delta +\delta ) - B_i(k\delta)   \right), \;  k\delta \leqslant t < (k+1)\delta,
\end{equation}
then by \cite[Theorem 7.1]{IkedaWatanabe1989} the family $\{ B_{\delta} \}_{\delta >0}$ satisfies \eqref{convergence.BM} and \eqref{convergence.LevyArea}.  Let us define now a sequence of  processes $g_\delta (t)$ on $\Hei$.

\begin{definition}
Let $B_\delta (t)$ be an approximation of a two-dimensional Brownian motion satisfying \eqref{convergence.LevyArea}. For each $\delta$, $t$, and $\omega$ we set 
\[
g_\delta (t) = \left( g_{1,\delta} (t) , g_{2,\delta} (t) , g_{3,\delta} (t)  \right),
\]
where
\begin{align}\label{approximation.Hypo.BM}
& g_{1,\delta} (t) = B_{1,\delta}(t) \notag
\\
& g_{2,\delta} (t) = B_{2,\delta}(t)
\\
& g_{3,\delta} (t) = A_\delta (t). \notag 
\end{align}
\end{definition}

Let $C^2_p \left( \R^2 \right)$ be the space of piecewise continuously twice differentiable curves in $\R^2$ starting at zero, and set
\[
H_p \left( \Hei \right): = \left\{ \gamma : [0,1] \longrightarrow \Hei, \,\pi (\gamma) \in C^{2}_{p}(\R^2), \, 
\gamma_3(t) = \frac{1}{2} \int_0^t \omega \left( \pi(\gamma) (s),\pi(\gamma)^\prime(s) \right)ds  \right\},
\]
where $\gamma =  \left( \pi (\gamma) , \gamma_3 \right)$, that is, $H_p \left( \Hei \right)$ is the set of piecewise continuously twice  differentiable horizontal curves. Clearly we have that 
\[
\overline{H\left( \Hei\right) } = \overline{H_p\left( \Hei\right) }.
\]
We can view $g_\delta$ as a $H_p\left( \Hei \right)$-valued random variable, that is,
\begin{align}\label{e.map}
g_\delta : \Omega \longrightarrow H_p \left( \Hei \right), & \quad    \omega \mapsto \left\{ t \mapsto g_\delta(t,\omega) \right\},
\end{align}
and hence we can induce a probability measure $\mu_\delta$ on $\WH$ by 

\[
\mu_\delta (E) := \Prob \left( g_\delta ^{-1} \left(E \cap H_p \left( \Hei \right) \right) \right) 
\]
for any Borel set $E$ in $\WH$.

\begin{proposition}\label{p.support.approximation}
Let $\mathcal{S}_{\mu_\delta}$ be the support of the measure $\mu_\delta$. Then 
\[
\mathcal{S}_{\mu_{\delta}} \subset \overline{H_p\left( \Hei\right) }=\overline{H\left( \Hei\right) }.
\]
\end{proposition}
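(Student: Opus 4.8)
The plan is to show that the measure $\mu_\delta$ is concentrated on the set $H_p(\Hei)$ of piecewise-smooth horizontal curves, and then to invoke the characterization of $\mathcal{S}_{\mu_\delta}$ as the smallest closed set of full measure. No deep estimate is needed here; the whole content is the observation that $g_\delta$ is, by construction, an honest horizontal curve for every $\omega$.

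First I would check that for every $\omega \in \Omega$ the sample path $t \mapsto g_\delta(t,\omega)$ belongs to $H_p(\Hei)$. By \eqref{approximationBM} the planar part $\bm{g}_\delta = \bm{B}_\delta$ is continuous (it matches across grid points since $f_i(0)=0$ and $f_i(1)=1$), starts at $0$, and is piecewise as regular as the $f_i$, so that $\bm{g}_\delta \in C^2_p(\R^2)$ as required in the definition of $H_p(\Hei)$. Moreover, by \eqref{approximation.LevyArea} the third coordinate is
\[
g_{3,\delta}(t) = A_\delta(t) = \frac{1}{2}\int_0^t \omega\!\left( \bm{g}_\delta(s), \bm{g}_\delta^\prime(s) \right) ds,
\]
which is exactly the horizontality constraint appearing in the definition of $H_p(\Hei)$. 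Hence the map $g_\delta$ of \eqref{e.map} genuinely takes values in $H_p(\Hei)$; in particular its paths are continuous curves starting at the identity, so $g_\delta(\cdot,\omega) \in \WH$.

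Next I would translate this into a statement about the measure. The set $\overline{H_p(\Hei)}^{\Vert \cdot \Vert_{\WH}}$ is closed, hence Borel in $\WH$, and since $H_p(\Hei) \subseteq \overline{H_p(\Hei)}^{\Vert \cdot \Vert_{\WH}}$ we have $\overline{H_p(\Hei)}^{\Vert \cdot \Vert_{\WH}} \cap H_p(\Hei) = H_p(\Hei)$. Applying the definition of $\mu_\delta$ with $E = \overline{H_p(\Hei)}^{\Vert \cdot \Vert_{\WH}}$ therefore gives
\[
\mu_\delta\!\left( \overline{H_p(\Hei)}^{\Vert \cdot \Vert_{\WH}} \right) = \Prob\!\left( g_\delta^{-1}\bigl( H_p(\Hei) \bigr) \right) = \Prob(\Omega) = 1,
\]
where the middle equality uses the previous step. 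Thus $\overline{H_p(\Hei)}^{\Vert \cdot \Vert_{\WH}}$ is a closed subset of $\WH$ of full $\mu_\delta$-measure. Since $\mathcal{S}_{\mu_\delta}$ is by definition the smallest such set, the inclusion $\mathcal{S}_{\mu_\delta} \subseteq \overline{H_p(\Hei)}^{\Vert \cdot \Vert_{\WH}}$ is immediate, and the identity $\overline{H_p(\Hei)}^{\Vert \cdot \Vert_{\WH}} = \overline{H(\Hei)}^{\Vert \cdot \Vert_{\WH}}$ recorded just before \eqref{e.map} finishes the argument.

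Rather than a genuine obstacle, the one point requiring care is the measure-theoretic bookkeeping: because $\mu_\delta$ is defined as an image measure through a map into $H_p(\Hei)$, one should evaluate it on the \emph{closed} set $\overline{H_p(\Hei)}^{\Vert \cdot \Vert_{\WH}}$ (which is Borel), so that the measurability of $H_p(\Hei)$ itself in $\WH$ never has to be established. I would also make sure, as above, that the sample paths are continuous and start at $e$, so that they indeed lie in $\WH$ and the map \eqref{e.map} is well defined.
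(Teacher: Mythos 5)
Your proof is correct and takes essentially the same route as the paper: both verify that every sample path of $g_\delta$ lies in $H_p\left(\Hei\right)$ by construction, deduce from the definition of $\mu_\delta$ that the closed set $\overline{H_p\left(\Hei\right)}^{\Vert \cdot \Vert_{\WH}}$ has full $\mu_\delta$-measure, and conclude by the minimality of the support. Your one deviation---evaluating $\mu_\delta$ directly on the closure via $E \cap H_p\left(\Hei\right) = H_p\left(\Hei\right)$, so that the Borel measurability of $H_p\left(\Hei\right)$ itself is never needed---is a slight tidying of the paper's bookkeeping rather than a different argument.
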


\begin{proof}
By \ref{e.map} we have that $ g_\delta (\Omega ) \subset  H_p\left( \Hei\right) $ and hence 

\[
\Omega \subset g_\delta^{-1} g_\delta \left( \Omega \right) \subset g_\delta ^{-1} \left( H_p\left( \Hei\right) \right) \subset \Omega.
\]
Therefore by the definition of $\mu_\delta$ it follows that
\[
1 = \Prob \left(  g_\delta ^{-1} \left( H_p\left( \Hei\right) \right) \right) = \mu_\delta\left( H_p\left( \Hei\right) \right) \leqslant  \mu_\delta \left( \overline{H_p\left( \Hei\right) }\right) \leqslant 1,
\]
and the proof is complete since $S_{\mu_\delta}$ is the smallest closed subset of $\WH$ having $\mu_\delta$-measure one.
\end{proof}

We can now state and prove the main result of this section, that is, that the family $\left\{ g_\delta \right\}_{\delta>0} $ is an approximation of the hypoelliptic Brownian motion $g$ in the sense that $ \E \left[ \max_{0 \leqslant t \leqslant 1} \rho ( g_{\delta} (t)^{-1} g_{t} )^{2} \right] \longrightarrow 0$ as $ \delta \to 0$ for any homogeneous norm $\rho$. As a consequence, the support of the measure $\mu$ is contained  in $\overline{H\left( \Hei\right) }$.

\begin{theorem}\label{thm.convergence.Hypo.BM}
Let $\left\{ g_\delta \right\}_{\delta >0}$ be the sequence  defined  by \eqref{approximation.Hypo.BM}, and $\rho$ be a homogeneous norm on $\Hei$. Then

\begin{equation}\label{e.convergence.Hypo.BM}
\lim_{\delta \rightarrow 0} \E \left[ \max_{0 \leqslant t \leqslant 1} \rho ( g_{\delta} (t)^{-1} g_{t} )^2 \right]=0.
\end{equation}
\end{theorem}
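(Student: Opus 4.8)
The goal is to show $\E\left[ d_{\WH}(g_\delta, g)^2 \right] \to 0$, where the distance is computed using the homogeneous norm $\vert \cdot \vert = \rho(\cdot, e)$. The plan is to reduce everything to the two convergence facts already granted in \eqref{convergence.BM} and \eqref{convergence.LevyArea}, namely that the approximating Brownian path $B_\delta$ and the approximating area $A_\delta$ converge to $B$ and $A$ in $L^2$ of the uniform norm. The first step is to unwind the definition of $d_{\WH}$. Writing $g_t = (\bm{W}_t, A_t)$ and $g_\delta(t) = (B_\delta(t), A_\delta(t))$, I compute the increment $g_\delta(t)^{-1} g_t$ using the group law from Section \ref{subsec3.1}. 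Using $(\mathbf{v},z)^{-1} = (-\mathbf{v},-z)$, the first two (horizontal) components of $g_\delta(t)^{-1} g_t$ are simply $\bm{W}_t - B_\delta(t)$, while the third component is $A_t - A_\delta(t)$ plus a correction term of the form $\tfrac{1}{2}\omega(-B_\delta(t), \bm{W}_t)$ coming from the Heisenberg multiplication.

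Next I would plug this into the explicit homogeneous norm \eqref{hom.norm}. This gives, for each fixed $t$,
\[
\vert g_\delta(t)^{-1} g_t \vert
= \left( \Vert \bm{W}_t - B_\delta(t) \Vert_{\R^2}^4 + \left\vert A_t - A_\delta(t) + \tfrac{1}{2}\omega(\bm{W}_t, B_\delta(t)) - \tfrac{1}{2}\omega(\bm{W}_t,B_\delta(t)) \right\vert^2 \right)^{1/4}.
\]
One should check that the $\omega$-terms from the inverse, the product, and the built-in $\omega(\mathbf{x}_1,\mathbf{x}_2)$ of $\rho$ combine to leave only a difference of area-type functionals plus a bilinear cross term in $(\bm{W}_t - B_\delta(t))$ and $B_\delta(t)$. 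The key point is that taking the max over $t \in [0,1]$ and then using the elementary inequality $(a+b)^{1/4} \leqslant a^{1/4}+b^{1/4}$ (or better, controlling the fourth power directly) lets me bound $d_{\WH}(g_\delta,g)$ by a sum of $\Vert \bm{W} - B_\delta \Vert_{W_0(\R^2)}$ and $\Vert A - A_\delta \Vert_{W_0(\R)}^{1/2}$ together with the cross term.

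The main obstacle, and the step deserving the most care, is controlling the cross term $\sup_t \vert \omega(\bm{W}_t, B_\delta(t)) - \omega(\text{matching piece})\vert$, i.e.\ the genuinely nonlinear contribution of the group structure. The clean way to handle it is to write the residual symplectic term as $\omega\big(\bm{W}_t - B_\delta(t),\, B_\delta(t)\big)$ (or an analogous bilinear expression), bound it pointwise by $\Vert \bm{W}_t - B_\delta(t)\Vert_{\R^2}\,\Vert B_\delta(t)\Vert_{\R^2}$, and then apply Cauchy--Schwarz after taking the sup and expectation:
\[
\E\left[ \sup_{t} \Vert \bm{W}_t - B_\delta(t)\Vert^2 \,\Vert B_\delta(t)\Vert^2 \right]
\leqslant \E\left[ \Vert \bm{W} - B_\delta \Vert_{W_0(\R^2)}^4 \right]^{1/2} \E\left[ \Vert B_\delta \Vert_{W_0(\R^2)}^4 \right]^{1/2}.
\]
Here I would need an $L^4$ (or uniform-in-$\delta$ higher-moment) bound: the second factor $\E[\Vert B_\delta\Vert^4]$ stays bounded uniformly in $\delta$ because the piecewise-linear-type interpolation in \eqref{approximationBM} does not inflate the sup norm beyond that of $B$, while $\E[\Vert \bm{W}-B_\delta\Vert^4] \to 0$ should follow from strengthening \eqref{convergence.BM} to $L^4$ (available from the same Ikeda--Watanabe estimates, since these interpolation errors have Gaussian-type tails). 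Combining the three contributions and sending $\delta \to 0$ then yields $\E[d_{\WH}(g_\delta,g)^2] \to 0$.
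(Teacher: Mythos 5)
Your overall strategy coincides with the paper's: compute $g_\delta(t)^{-1}g_t$ from the group law, evaluate the homogeneous norm, and reduce \eqref{e.convergence.Hypo.BM} to the two given facts \eqref{convergence.BM}--\eqref{convergence.LevyArea} plus control of a symplectic cross term. One correction first: your displayed formula is wrong as written, since the terms $+\tfrac12\omega(\bm{W}_t,B_\delta(t))$ and $-\tfrac12\omega(\bm{W}_t,B_\delta(t))$ cancel identically, which would (incorrectly) make the cross term disappear. The correct third component of $g_\delta(t)^{-1}g_t$ is
\[
A_t-A_\delta(t)-\tfrac{1}{2}\,\omega\left(B_\delta(t),\bm{W}_t\right)
=A_t-A_\delta(t)+\tfrac{1}{2}\,\omega\left(\bm{W}_t-B_\delta(t),\,B_\delta(t)\right),
\]
so a genuine bilinear remainder survives. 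Your prose acknowledges this and your subsequent estimates are aimed at the right object, but the display should be fixed.

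The substantive divergence from the paper is how the cross term is killed. You use Cauchy--Schwarz at the level of squares, which forces you to assert an $L^4$ strengthening of \eqref{convergence.BM} and a uniform-in-$\delta$ bound on $\E\left[\Vert B_\delta\Vert_{\WR}^4\right]$; neither is established in the paper, and ``available from the same Ikeda--Watanabe estimates'' is a claim you would have to prove (it is true --- the interpolation error is dominated by a constant times the modulus of continuity of $B$ over intervals of length $\delta$, which has all moments tending to zero --- but it is extra work, not a citation). The paper avoids this entirely by an independence argument: writing $\omega(B_\delta,B)=\omega(B_\delta-B,B)$, the cross term is bounded by $\tfrac12\Vert B_1\Vert_{W_0(\R)}\Vert B_{2,\delta}-B_2\Vert_{W_0(\R)}+\tfrac12\Vert B_2\Vert_{W_0(\R)}\Vert B_{1,\delta}-B_1\Vert_{W_0(\R)}$, and since $B_i$ is independent of $B_{j,\delta}-B_j$ for $i\neq j$, each expectation factorizes as $\E\left[\Vert B_i\Vert\right]\E\left[\Vert B_{j,\delta}-B_j\Vert\right]$, which tends to zero by Jensen and \eqref{convergence.BM} alone. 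Alternatively, you can close your own route without any $L^4$ input: keep the cross term at first power and bound $\E\left[\Vert\bm{W}-B_\delta\Vert_{\WR}\Vert B_\delta\Vert_{\WR}\right]\leqslant\E\left[\Vert\bm{W}-B_\delta\Vert_{\WR}^2\right]^{1/2}\E\left[\Vert B_\delta\Vert_{\WR}^2\right]^{1/2}$, where the second factor is uniformly bounded by the triangle inequality and \eqref{convergence.BM}. With either repair your argument is complete.
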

\begin{proof} 

By \eqref{e.DistEquivalence} and \eqref{hom.norm} we have that 

\begin{align*}
&\max_{0 \leqslant t \leqslant 1} \rho ( g_{\delta} (t)^{-1} g_{t} )^4 \leqslant C \max_{0\leqslant t\leqslant 1} \vert g_{\delta}^{-1}(t) g_{t} \vert^{4}
\\
& \leqslant   C \max_{0\leqslant t\leqslant 1} \vert  B_{t}- B_{\delta}(t)  \vert^{4} + C \max_{0\leqslant t\leqslant 1} \left| A_{t} -A_{\delta} (t) - \frac{1}{2} \omega \left( B_{\delta} (t), B_{t} \right) \right|^{2}
\\
&\leqslant  C \left( \max_{0\leqslant t\leqslant 1} \vert B_{t}- B_{\delta} (t) \vert^{2} + \max_{0\leqslant t\leqslant 1} \left| A_{t} -A_{\delta} (t) - \frac{1}{2} \omega \left( B_{\delta} (t), B_{t} \right) \right| \right)^2,
\end{align*}
and hence
\begin{align*}
&  \E\left[ \max_{0 \leqslant t \leqslant 1} \rho ( g_{\delta} (t)^{-1} g_{t} )^2 \right] 
\\
& \leqslant  C \E \left[ \max_{0\leqslant t\leqslant 1} \vert B_{t}- B_{\delta} (t) \vert^{2}  \right] + C \E \left[\max_{0\leqslant t\leqslant 1} \left| A_{t} -A_{\delta} (t) \right|\right] + C \E \left[ \max_{0\leqslant t\leqslant 1} \left| \frac{1}{2} \omega \left( B_{\delta} (t), B_{t} \right) \right| \right]
\\
& \leqslant C \E \left[ \max_{0\leqslant t\leqslant 1} \vert B_{t}- B_{\delta} (t) \vert^{2}  \right] + C \E \left[\max_{0\leqslant t\leqslant 1} \left| A_{t} -A_{\delta} (t) \right|^{2}\right]^{\frac{1}{2}} + C \E \left[ \max_{0\leqslant t\leqslant 1} \left| \frac{1}{2} \omega \left( B_{\delta} (t), B_{t} \right) \right| \right],
\end{align*}
for some constant $C$ (which varies from line to line). By \eqref{convergence.BM} and \eqref{convergence.LevyArea}, we only need to show that 
\[
\E \left[ \max_{0\leqslant t\leqslant 1} \left| \frac{1}{2} \omega \left( B_{\delta} (t), B_{t} \right) \right| \right] \longrightarrow 0, \text{ as $\delta \mapsto 0 $. }
\]
Since $B_{i}$ is independent of $ B_{j,\delta}  - B_{j}$ when $i \neq j$, and
\begin{align*}
& \max_{0\leqslant t\leqslant 1} \left| \frac{1}{2} \omega \left( B_{\delta} (t), B_{t} \right) \right| \leqslant \frac{1}{2}  \max_{0\leqslant t\leqslant 1} \vert B_{1} (t) \vert   \max_{0\leqslant t\leqslant 1}  \vert B_{2,\delta} (t) - B_{2} (t) \vert 
\\
& +\frac{1}{2}  \max_{0\leqslant t\leqslant 1} \vert B_{2} (t) \vert   \max_{0\leqslant t\leqslant 1}  \vert B_{1,\delta} (t) - B_{1} (t) \vert ,
\end{align*}
the proof is complete.
\end{proof}

\begin{corollary}\label{c.weak.convergence}
We have that $\mu_{\delta} \rightarrow \mu$ weakly. In particular
\begin{equation}\label{support.Hypo.measure}
\mathcal{S}_{\mu} \subset \overline{H\left( \Hei\right) }.
\end{equation}
\end{corollary}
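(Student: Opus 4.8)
The plan is to obtain both assertions directly from Theorem \ref{thm.convergence.Hypo.BM} and Proposition \ref{p.support.approximation}, with no further input from the Heisenberg structure beyond what is already encoded in the metric $d_{\WH}$. First I would upgrade the $L^2$-convergence of Theorem \ref{thm.convergence.Hypo.BM} to weak convergence of the laws. By Markov's inequality, for every $\varepsilon > 0$,
\[
\Prob\left( d_{\WH}(g_\delta, g) > \varepsilon \right) \leqslant \frac{1}{\varepsilon^{2}}\,\E\left[ d_{\WH}(g_\delta, g)^{2} \right] \longrightarrow 0
\]
as $\delta \to 0$, so $g_\delta \to g$ in probability with respect to the metric $d_{\WH}$ on $\WH$. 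Since convergence in probability of $(\WH, d_{\WH})$-valued random variables implies convergence in distribution, the laws $\mu_\delta$ of $g_\delta$ converge weakly to the law $\mu$ of $g$, which is the first claim.

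Second, for the support inclusion, I would set $F := \overline{H\left( \Hei \right)}^{\Vert \cdot \Vert_{\WH}}$, a closed subset of $\WH$. By Proposition \ref{p.support.approximation} we have $\mathcal{S}_{\mu_\delta} \subset F$, and since $\mathcal{S}_{\mu_\delta}$ carries full $\mu_\delta$-measure, $\mu_\delta(F) = 1$ for every $\delta$. Its complement $U := \WH \setminus F$ is then open with $\mu_\delta(U) = 0$ for all $\delta$. Applying the portmanteau theorem to the weak convergence $\mu_\delta \to \mu$ along open sets gives
\[
\mu(U) \leqslant \liminf_{\delta \to 0} \mu_\delta(U) = 0,
\]
hence $\mu(F) = 1$. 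Because $\mathcal{S}_\mu$ is by definition the smallest closed set of full $\mu$-measure and $F$ is closed, this forces $\mathcal{S}_\mu \subset F$, establishing \eqref{support.Hypo.measure}.

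As an alternative to the portmanteau step, one could bypass weak convergence altogether: from convergence in probability extract a subsequence $g_{\delta_k} \to g$ almost surely in $(\WH, d_{\WH})$; since each $g_{\delta_k}$ takes values in $H_p\left( \Hei \right) \subset F$ and $F$ is closed, the almost sure limit satisfies $g \in F$ a.s., i.e. $\mu(F) = 1$. I expect the only delicate point to be measure-theoretic bookkeeping rather than anything geometric: one must confirm that $(\WH, d_{\WH})$ is a genuine (separable) metric space, so that the implications ``$L^2 \Rightarrow$ in probability $\Rightarrow$ in distribution'' and the portmanteau characterization of weak convergence are legitimately available, given that $\WH$ fails to be a linear space (Remark \ref{r.not.banach.space}). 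Everything substantive has already been done in Theorem \ref{thm.convergence.Hypo.BM} and Proposition \ref{p.support.approximation}; this corollary is the soft functional-analytic wrapper around them.
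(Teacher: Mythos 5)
Your proposal is correct and follows essentially the same route as the paper: Markov's inequality upgrades the $L^2$-estimate of Theorem \ref{thm.convergence.Hypo.BM} to convergence in probability, hence in distribution, and then the portmanteau theorem (you use the open-set/$\liminf$ form, the paper the equivalent closed-set/$\limsup$ form) combined with Proposition \ref{p.support.approximation} and the minimality of $\mathcal{S}_\mu$ gives the inclusion. The only cosmetic difference is that you pass through $\mu_\delta(\mathcal{S}_{\mu_\delta})=1$, whereas the paper reads $\mu_\delta\left(\overline{H_p\left(\Hei\right)}^{\Vert\cdot\Vert_{\WH}}\right)=1$ directly off the proof of Proposition \ref{p.support.approximation}, which sidesteps the separability caveat you rightly flag.
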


\begin{proof}
Let us first show that $\{g_{\delta} \}_{\delta >0}$ converges to $g$ in probability in $\WH$. For any fixed  $\varepsilon>0$ we have that 
\[
\Prob \left( \max_{0\leqslant t \leqslant 1} \rho (g_{\delta} (t)^{-1} g_{t} ) > \varepsilon \right)\leqslant \frac{1}{\varepsilon^2} \E \left[  \max_{0\leqslant t \leqslant 1} \rho (g_{\delta} (t)^{-1} g_{t} )^2 \right] 
\] 
which goes to zero by Theorem \ref{thm.convergence.Hypo.BM}. Therefore $\{g_{\delta} \}_{\delta >0}$ converges to $g$ in distribution, and hence $\mu_\delta$ converges weakly to $\mu$ in $\WH$. Thus, for any closed set $F$ in $\WH$ we have that 
\[
 \mu (F) \geqslant \limsup_{\delta \rightarrow 0} \mu_\delta (F) .
\]
In particular, for $F= \overline{H_p\left( \Hei\right) }$ and by Proposition \ref{p.support.approximation} it follows that 
\[
\mu \left( \overline{H_p\left( \Hei\right) } \right) \geqslant  \limsup_{\delta \rightarrow 0} \mu_\delta (\overline{H_p\left( \Hei\right) }) =1.
\]
Since $\mathcal{S}_\mu$ is the smallest closed subset having $\mu$-measure one, we have that $ \mathcal{S}_\mu \subset \overline{H_p\left( \Hei\right) }  = \overline{H\left( \Hei\right) }$.

\end{proof}

We conclude this section showing that for each fixed $\delta$, the measures $\mu$ and $\mu_\delta$ are singular.
\begin{proposition}\label{p.singular}
For each $\delta$ the measures $\mu$ and $\mu_\delta$ are singular.
\end{proposition}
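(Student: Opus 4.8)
The goal is to show that the Wiener measure $\mu$ and the approximation measure $\mu_\delta$ are singular, meaning there is a Borel set carrying all the mass of one and none of the other. The natural candidate set is $H\left(\Hei\right)$ itself, the space of finite energy horizontal curves. Proposition \ref{p.support.approximation} already established that $\mu_\delta\left(H_p\left(\Hei\right)\right)=1$, and since $H_p\left(\Hei\right)\subset H\left(\Hei\right)$ we immediately get $\mu_\delta\left(H\left(\Hei\right)\right)=1$. So the entire content of the proposition is to prove the complementary fact that $\mu\left(H\left(\Hei\right)\right)=0$; once both are in hand, $H\left(\Hei\right)$ separates the two measures and singularity follows by definition.

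\textbf{Key steps.} First I would reduce the question to a statement about the planar Brownian motion $\bm{W}_t=\left(W_1(t),W_2(t)\right)$. By Remark \ref{r.explanation.name}, a curve $\gamma=\left(\bm{\gamma},\gamma_3\right)$ lies in $H\left(\Hei\right)$ if and only if $\bm{\gamma}$ is in the Cameron-Martin space of $\R^2$, i.e. $\bm{\gamma}$ is absolutely continuous with $\int_0^1\vert\bm{\gamma}^\prime(s)\vert_{\R^2}^2\,ds<\infty$ (the third coordinate is then automatically determined by the horizontality constraint \eqref{e.horizontal}, and equals Lévy's area of $\bm{\gamma}$, which matches the definition of $g_t$). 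Thus $\left\{g\in H\left(\Hei\right)\right\}$ pulls back under the map $g$ to the event that the sample path of $\bm{W}_t$ is absolutely continuous with square-integrable derivative. Second, I would invoke the classical fact that Brownian motion almost surely has paths that are nowhere differentiable, and in particular of infinite quadratic variation sense relevant here—more precisely, the law of two-dimensional Brownian motion assigns measure zero to the Cameron-Martin space $H\left(\R^2\right)$, since $H\left(\R^2\right)$ is a dense but measure-zero linear subspace of $W_0\left(\R^2\right)$ under Wiener measure $\nu$. This is a standard property of Gaussian measures: the Cameron-Martin subspace always has measure zero in infinite dimensions. Combining, $\mu\left(H\left(\Hei\right)\right)=\nu\left(H\left(\R^2\right)\right)=0$, which completes the argument.

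\textbf{Main obstacle.} The only delicate point is verifying that the set $H\left(\Hei\right)$ is genuinely Borel measurable in $\WH$ and that the reduction to the planar Cameron-Martin event is clean, i.e. that the third-coordinate constraint does not introduce subtleties. The area coordinate $\gamma_3$ is determined pathwise by $\bm{\gamma}$ through \eqref{e.horizontal} once $\bm{\gamma}$ is absolutely continuous, so the event $\left\{g\in H\left(\Hei\right)\right\}$ coincides exactly with $\left\{\bm{W}\in H\left(\R^2\right)\right\}$ up to a $\Prob$-null set, and measurability of the Cameron-Martin subspace inside $W_0\left(\R^2\right)$ is standard. Beyond this bookkeeping, the heart of the matter is entirely the classical Gaussian fact that the Cameron-Martin space is $\nu$-null, which I would simply cite rather than reprove.
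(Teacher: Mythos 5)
Your proposal is correct and follows essentially the same route as the paper: both reduce singularity to showing $\mu\left( H\left( \Hei \right) \right)=0$ (with $\mu_\delta\left( H\left( \Hei \right) \right)=1$ already supplied by Proposition \ref{p.support.approximation}), then project onto the first two coordinates via $\pi(\gamma)=\bm{\gamma}$ and invoke the classical fact that the Cameron-Martin space over $\R^2$ is $\nu$-null. The only cosmetic difference is that the paper needs just the one-sided inclusion $\pi\left( H\left( \Hei \right) \right)\subset H\left(\R^2\right)$, giving $\mu\left( H\left( \Hei \right) \right)\leqslant \nu\left( \pi\left( H\left( \Hei \right) \right)\right)=0$, so your discussion of the reverse implication (the third coordinate being determined by horizontality) is not actually required.
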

\begin{proof}
From the proof of Proposition \ref{p.support.approximation} we know that $\mu_\delta \left( H\left( \Hei \right) \right)=1 $. It is then enough to show that $\mu \left( H\left( \Hei \right) \right)=0 $. Let us denote by $\nu$ the law of a two-dimensional standard Brownian motion. By definition of $g_t$ and $\pi$ \eqref{eqn.projection}, the following diagram commutes 
 \[
  \begin{tikzcd}
    \Omega \arrow{r}{g} \arrow[swap]{dr}{B} & \WH \arrow{d}{\pi} \\
     & \WR,
  \end{tikzcd}
  \]
and for any Borel set $E$ in $\WR$ we have that
\[
\nu (E) := \Prob \left( B^{-1} (E) \right) = \Prob \left( g^{-1} \circ \pi ^{-1} (E) \right) = \mu \left( \pi^{-1}(E) \right).
\]
Moreover, from Remark  \ref{r.explanation.name} we know that $\pi\left( H \left( \Hei \right) \right) $ is  the Cameron-Martin space over $\R^2$, which is known to have $\nu$-measure zero, see \cite{Gross1967} for more details. Therefore we can conclude that 
\[
\mu\left( H \left( \Hei \right) \right) \leqslant \mu \left(\pi^{-1} \pi\left( H \left( \Hei \right) \right) \right) = \nu \left(  \pi\left( H \left( \Hei \right) \right) \right)=  0.
\]

\end{proof}

\subsection{Support of the Wiener measure}

The goal of this section is to prove that  $\overline{H\left( \Hei\right) } \subset \mathcal{S}_{\mu}$ which will follow from Theorem \ref{thm.estimate}. Moreover, in Proposition  \ref{p.closure } we show that  $\overline{H\left( \Hei\right) } = \WH$.

\begin{theorem}\label{thm.estimate}
Let $\phi = \left( \pi (\phi), \phi_3 \right) = \in H_p\left( \Hei\right)$. For $\delta>0$ let us denote by $E_{\delta, \phi}$ the event 
\[
E_{\delta, \phi} := \left\{ \max_{0\leqslant t \leqslant 1} \vert B_t - \pi (\phi) (t) \vert_{\R^2} < \delta \right\}.
\]
Then for any $\varepsilon>0$
\[
 \lim_{\delta \rightarrow 0}  \Prob \left(   \max_{0\leqslant t \leqslant 1} \rho (\phi(t)^{-1} g_{t} )> \varepsilon \, \vert \,E_{\delta, \phi}  \right)=0,
 \]
where $\rho$ is a homogeneous norm on $\Hei$. 
\end{theorem}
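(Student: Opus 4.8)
The plan is to reduce the statement to a small-time estimate for a time-changed Brownian motion, isolating the only genuinely delicate quantity, the vertical (area) discrepancy between $g$ and $\phi$. Writing $g_t=(B_t,A_t)$ and $\phi_t=(\bm{\phi}(t),\phi_3(t))$, the group law of Proposition~\ref{p.Differentials} gives $g_t^{-1}\phi_t=(\bm{\phi}(t)-B_t,\,H_t)$ with $H_t:=\phi_3(t)-A_t-\tfrac12\omega(B_t,\bm{\phi}(t))$, so that by \eqref{hom.norm},
\[
|g_t^{-1}\phi_t|=\Big(\|\bm{\phi}(t)-B_t\|_{\R^2}^4+|H_t|^2\Big)^{1/4}.
\]
On $E_{\delta,\phi}$ the first summand is at most $\delta^4$ uniformly in $t$; hence for $\delta<\varepsilon$ the event $\{d_{\WH}(g,\phi)>\varepsilon\}\cap E_{\delta,\phi}$ is contained in $\{\max_t|H_t|>(\varepsilon^4-\delta^4)^{1/2}\}\cap E_{\delta,\phi}$, and it suffices to prove $\Prob(\max_t|H_t|>\eta\mid E_{\delta,\phi})\to0$ for each fixed $\eta>0$.

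First I would simplify $H_t$ using the horizontality relation $\phi_3(t)=\tfrac12\int_0^t\omega(\bm{\phi},\bm{\phi}')\,ds$ and It\^o's formula; since $\omega$ is skew the It\^o and Stratonovich integrals agree, and $\bm{\phi}$ being of bounded variation contributes no bracket term. Setting $R_t:=B_t-\bm{\phi}(t)$, an integration by parts collapses all the $\phi_3$ and cross terms and yields the clean identity
\[
H_t=-\int_0^t\omega\big(R_s,\bm{\phi}'(s)\big)\,ds-\tfrac12\int_0^t\omega(R_s,dR_s).
\]
On $E_{\delta,\phi}$ one has $\|R\|_\infty<\delta$, so the first integral is bounded \emph{deterministically} by $\delta\,\|\bm{\phi}'\|_{L^1}$; thus the whole problem reduces to controlling the L\'evy area $\mathcal{A}^R_t:=\tfrac12\int_0^t\omega(R_s,dR_s)$ of the deviation path $R$, conditioned on the tube event $E_{\delta,\phi}=\{\sup_t|R_t|<\delta\}$. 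Note that $\mathcal{A}^R$ is \emph{not} controlled by $\|R\|_\infty$: a path can stay in a $\delta$-ball while winding repeatedly and accumulating large area, which is exactly why a quantitative argument is needed.

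The key device is the classical representation of a planar L\'evy area as a time-changed Brownian motion: when $R$ is itself a Brownian motion, the skew-product decomposition gives $\mathcal{A}^R_t=b_{\tau(t)}$ with $\tau(t)=\tfrac14\int_0^t|R_s|^2\,ds$ and $b$ a one-dimensional Brownian motion \emph{independent of the radial process} $|R|$. Two features then conspire: on $E_{\delta,\phi}$ the clock satisfies $\tau(1)\le\delta^2/4$, so $\max_t|\mathcal{A}^R_t|\le\sup_{0\le u\le\delta^2/4}|b_u|$; and the tube event $E_{\delta,\phi}=\{\sup_t|R_t|<\delta\}$ is measurable with respect to $|R|$, hence independent of $b$. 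Consequently the conditional probability is an unconditional one, and by the reflection principle $\Prob\big(\sup_{[0,\delta^2/4]}|b|>\eta\big)\le4\,\Prob\big(b_{\delta^2/4}>\eta\big)\to0$ as $\delta\to0$. This time-change identity, and its compatibility with conditioning on a radial event, is the observation special to $\Hei$.

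The obstacle is that this independence requires $R$ to be a genuine Brownian motion, whereas under $\Prob$ it is a Brownian motion with drift $-\bm{\phi}'$, which couples the angular and radial parts and destroys the skew-product structure. I would remove the drift by a Cameron--Martin change of measure: since $\bm{\phi}$ has finite energy, under $d\Q=\exp\!\big(\int_0^1\bm{\phi}'\cdot dB-\tfrac12\int_0^1|\bm{\phi}'|^2\,ds\big)\,d\Prob$ the process $R$ is a standard planar Brownian motion, so the previous paragraph applies verbatim under $\Q$. To transfer back, observe that on $E_{\delta,\phi}$ the density $Z=d\Prob/d\Q=\exp\!\big(-\int_0^1\bm{\phi}'\cdot dR-\tfrac12\int_0^1|\bm{\phi}'|^2\,ds\big)$ is, after integrating $\int\bm{\phi}'\cdot dR$ by parts (legitimate as $\bm{\phi}'$ is of bounded variation on $[0,1]$, being piecewise $C^1$), within a factor $e^{\pm C_\phi\delta}$ of the constant $e^{-\frac12\int|\bm{\phi}'|^2}$, because $|R|<\delta$ there. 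This pinches the two conditional probabilities together, giving $\Prob(\,\cdot\mid E_{\delta,\phi})\le e^{2C_\phi\delta}\,\Q(\,\cdot\mid E_{\delta,\phi})$ and hence the claim. The main difficulty is precisely this interaction between the conditioning tube and the area: a crude bound $\Prob(\cdots\mid E_{\delta,\phi})\le\Prob(\sup_{[0,\delta^2/4]}|b|>\eta)/\Prob(E_{\delta,\phi})$ fails, since both numerator and denominator decay like $e^{-c/\delta^2}$ and the ratio need not vanish; it is the radial measurability of $E_{\delta,\phi}$ together with the independence of $b$ that resolves it.
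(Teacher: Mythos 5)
Your proposal is correct, and its skeleton coincides with the paper's proof: the same decomposition of the homogeneous distance into the Euclidean deviation, the cross term $\int_0^t\omega(B_s-\bm{\phi}(s),\bm{\phi}'(s))\,ds$ bounded deterministically by $\delta C_\phi$ on the tube, and the L\'{e}vy area of the deviation path; the same Cameron--Martin--Girsanov reduction turning $B-\bm{\phi}$ into a standard Brownian motion; and the same time-change identity $\mathcal{A}^R_t=b_{\tau(t)}$, $\tau(1)\leqslant\delta^2/4$ on the tube, to kill the conditional probability. The one place you genuinely diverge is the transfer back from $\Q$ to $\Prob$: the paper isolates this as Lemma \ref{l.claim}, proving $\E[\mathcal{E}^\phi\,\vert\,F^\varepsilon_{\delta,\phi}\cap E_\delta]/\E[\mathcal{E}^\phi\,\vert\,E_\delta]\rightarrow 1$ by invoking the Ikeda--Watanabe conditional-exponential-moment lemma (Lemma \ref{Lemma3.1}), whereas you observe that after integration by parts the density is deterministically pinched within a factor $e^{\pm C_\phi\delta}$ of a constant on the tube, so the two conditional measures are directly comparable, $\Prob(\,\cdot\,\vert\,E_{\delta,\phi})\leqslant e^{2C_\phi\delta}\,\Q(\,\cdot\,\vert\,E_{\delta,\phi})$. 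Both rest on the identical integration-by-parts bound, but your version is more self-contained: it removes the need for Lemma \ref{Lemma3.1} entirely. A further small point in your favor: you state the independence correctly --- $b$ is independent of the \emph{radial} process $|R|$, and the tube event is radial-measurable --- which is exactly what the argument needs; the paper's phrasing (``$b_t$ is independent of $B_t$'') is an overstatement, since $b$ is a functional of $B$, though the use made of it is the same as yours.
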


\begin{proof}

By \eqref{e.DistEquivalence} it is enough to prove it for the homogeneous norm given by \eqref{hom.norm}. For $\phi\in H_p\left( \Hei\right)$ we have that
\begin{align*}
& \max_{0\leqslant t \leqslant 1} \vert \phi (t)^{-1} g_t \vert^4\leqslant \max_{0\leqslant t \leqslant 1} \vert B_t-\pi(\phi) (t)\vert^4_{\R^2} 
\\
&+ \max_{0\leqslant t \leqslant 1} \left| \frac{1}{2} \int_0^t  \omega\left( B_s-\pi (\phi)(s), dB_s -\pi (\phi)^{\prime} (s)ds \right)+ \int_0^t  \omega\left( B_s-\pi (\phi)(s), \pi(\phi)^\prime (s)\right)ds \right| ^2 
\\
& \leqslant  \left(  \max_{0\leqslant t \leqslant 1} \vert B_t-\pi(\phi) (t)\vert^{2}_{\R^2}  \right.
\\
&\left. + \max_{0\leqslant t \leqslant 1} \left| \frac{1}{2} \int_{0}^{t}  \omega\left( B_{s}-\pi (\phi)(s), dB_{s} -\pi(\phi)^{\prime} (s)ds \right)+ \int_{0}^{t}  \omega\left( B_s-\pi(\phi)(s), \pi(\phi)^{\prime}(s) \right)ds \right| \right)^2.
\end{align*}
Therefore on the event $E_{\delta, \phi}$ we have that 
\begin{align*}
&  \max_{0\leqslant t \leqslant 1} \vert \phi (t)^{-1} g_t \vert^2 \leqslant  \max_{0\leqslant t \leqslant 1} \vert B_t-\pi (\phi) (t)\vert^{2}_{\R^2} 
\\
& +  \max_{0\leqslant t \leqslant 1} \left| \frac{1}{2} \int_0^t  \omega\left( B_s-\pi(\phi)(s), dB_{s} -\pi (\phi)^\prime (s)ds \right)+ \int_0^t  \omega\left( B_s-\pi(\phi)(s), \pi(\phi)^\prime(s) \right)ds \right| 
\\
& \leqslant  \delta^2 + \max_{0\leqslant t \leqslant 1} \left| \int_0^t  \omega\left( B_s-\pi(\phi)(s), \pi(\phi)^{\prime}(s) \right)ds \right|  + \max_{0\leqslant t \leqslant 1} \left| \frac{1}{2} \int_0^t  \omega\left( B_s-\pi(\phi)(s), dB_s -\pi(\phi)^{\prime} (s)ds \right) \right|  
\\
& \leqslant  \delta^2 + \delta C_\phi  +  \max_{0\leqslant t \leqslant 1} \left| \frac{1}{2} \int_0^t  \omega\left( B_s-\pi(\phi)(s), dB_s -\pi (\phi)^{\prime} (s)ds \right) \right|,
\end{align*}
where $ C_\phi:= \int_0^1 \vert \phi_1^\prime (s) \vert +\vert \phi_2^\prime (s) \vert ds$. It then follows that

\begin{align*}
& \Prob \left(  \max_{0\leqslant t \leqslant 1} \vert \phi (t)^{-1} g_t \vert > \varepsilon \, \vert E_{\delta, \phi} \right) 
\\
& \leqslant \Prob \left( \max_{0\leqslant t \leqslant 1} \left| \frac{1}{2} \int_0^t  \omega\left( B_s-\pi (\phi)(s), dB_s -\pi (\phi)^{\prime} (s)ds \right) \right|  > \varepsilon^2 -\delta C_\phi - \delta^2 \;  \;  \vert \; \; E_{\delta, \phi} \right).
\end{align*}
Note that this last expression only depends on the process $ B_t - \pi (\phi)(t)$. Since $\phi = \left( \pi (\phi), \phi_3 \right) \in H_p\left( \Hei \right) $, by Remark  \ref{r.explanation.name} we know that $\pi (\phi )$ belongs to the Cameron-Martin space over $\R^{2}$.  Therefore from the Cameron-Martin-Girsanov Theorem there exists a probability measure $\Q^\phi$ such that the process $B^\phi_t:=B_t +\pi (\phi) (t)$ is a Brownian motion under $\Q^{\phi}$. More precisely there exists an exponential martingale $\mathcal{E}^\phi$ such that 
\[
\Q^\phi (A) = \E \left[ \mathcal{E}^\phi \mathbbm{1}_A  \right] \quad \forall A \in \mathcal{F},
\]
where $\mathcal{E}^\phi= \exp \left( -\int_0^1 \langle \pi(\phi)^{\prime} (s), dB_{s} \rangle_{\R^2} ds  - \frac{1}{2} \int_0^1 \vert \pi (\phi)^{\prime} (s) \vert^{2}_{\R^2} ds \right)$.  Note that 
\begin{align*}
& d \left( B_{t} - \pi ( \phi ) (t) \right) = dB_t - \pi( \phi )^{\prime} (t) dt, \; \text{and}
\\
& dB_{t} = dB^{\phi}_{t} - \pi (\phi)^{\prime}(t) dt,
\end{align*}
that is, the law of $ B_t - \pi(\phi) (t) $ under $\Prob$ is the same as the law of $B_t$ under $\Q^\phi$. Therefore we can write 
\begin{align*}
& \Prob\left( E_{\delta, \phi} \right) = \Prob \left(  \max_{0\leqslant t \leqslant 1} \vert B_t - \pi( \phi) (t) \vert_{\R^2} < \delta \right)  
\\
&= \Q^\phi \left(\max_{0\leqslant t \leqslant 1} \vert B_t  \vert_{\R^2} < \delta \right) = \E \left[ \mathcal{E}^\phi \mathbbm{1}_{E_\delta} \right] =\E \left[ \mathcal{E}^\phi \vert  E_\delta \right] \Prob \left( E_\delta \right)  ,
\end{align*}
where we set $ E_\delta:= \left\{ \max_{0\leqslant t \leqslant 1} \vert B_t\vert_{\R^2} < \delta \right\}$.
Similarly we have that
\begin{align*}
&\Prob \left( \max_{0\leqslant t \leqslant 1} \left| \frac{1}{2} \int_0^t  \omega\left( B_s-\pi (\phi)(s), dB_{s} -\pi(\phi)^{\prime} (s)ds \right) \right|  > \varepsilon^2 -\delta C_\phi - \delta^2 \, , \; E_{\delta, \phi} \right)  
\\
& =  \E \left[ \mathcal{E}^\phi \vert F_{\delta,  \phi }^\varepsilon \cap E_{\delta} \right] \Prob \left( F_{\delta, \phi}^\varepsilon \cap E_{\delta} \right),
\end{align*}
where $F_{\delta, \phi}^\varepsilon := \left\{ \max_{0\leqslant t \leqslant 1} \left| \frac{1}{2} \int_0^t  \omega\left( B_s, dB_s \right) \right|  > \varepsilon^2 -\delta C_\phi - \delta^2 \right\}  $. Therefore it follows that 
\begin{align}\label{e.idk4}
&\Prob \left(  \max_{0\leqslant t \leqslant 1} \vert \phi (t)^{-1} g_t \vert > \varepsilon \, \vert E_{\delta, \phi} \right)  \nonumber 
\\
& \leqslant \Prob \left( \max_{0\leqslant t \leqslant 1} \left| \frac{1}{2} \int_0^t  \omega\left( B_s-\pi (\phi)(s), dB_s -\pi (\phi )^{\prime} (s)ds \right) \right|  > \varepsilon^2 -\delta C_\phi - \delta^2 \, \vert E_{\delta, \phi} \right)  \nonumber
\\
&  = \frac{  \Prob \left( F_{\delta, \phi}^\varepsilon \cap E_{\delta} \right)  E \left[ \mathcal{E}^\phi \vert F_{\delta,  \phi }^\varepsilon \cap E_{\delta} \right] }{ \Prob \left( E_\delta \right)  \E \left[ \mathcal{E}^\phi \vert  E_\delta \right]  }   =  \Prob \left( F_{\delta, \phi}^\varepsilon \, \vert \, E_{\delta} \right)\times \frac{ E \left[ \mathcal{E}^\phi \vert F_{\delta,  \phi }^\varepsilon \cap E_{\delta} \right]  }{ \E \left[ \mathcal{E}^\phi \vert  E_\delta \right]  }   
\end{align}
We will show later in the paper, see Lemma \ref{l.claim}, that for any $\varepsilon >0$ and any $\phi \in H_p \left( \Hei \right)$ we have that 
\begin{equation}\label{e.claim}
\lim_{\delta \rightarrow 0} \frac{\E \left[ \mathcal{E}^\phi \, \vert \, F_{\delta, \phi}^\varepsilon \cap E_{\delta}  \right]}{\E \left[ \mathcal{E}^\phi \, \vert \,   E_{\delta}\right]}=1.
\end{equation}

In light of \ref{e.idk4} and \ref{e.claim}, the proof will be completed once we show that 
\begin{align*}
&\lim_{\delta \rightarrow 0} \Prob \left(F_{\delta, \phi}^\varepsilon  \, \vert \, E_\delta \right) :=
\\
&\lim_{\delta \rightarrow 0}\Prob \left(   \max_{0\leqslant t \leqslant 1} \left| \frac{1}{2} \int_0^t  \omega\left( B_s, dB_s \right) \right|  > \varepsilon^2 -\delta C_\phi - \delta^2\; \left| \; \; \max_{0\leqslant t \leqslant 1} \vert B_t\vert_{\R^2} < \delta  \right)  =0. \right.
\end{align*}

The process $ A_t:=\frac{1}{2} \int_0^t \omega\left( B_s, dB_s \right)$ is a square integrable martingale with zero mean, and therefore there exists a one dimensional Brownian motion $b_t$  such that
\[
b_{\tau(t)}  = \frac{1}{2} \int_0^t  \omega\left( B_s, dB_s \right),
\]
where  $\tau(t)= \frac{1}{4} \int_0^t  B_1(s) ^2+  B_2(s)^2ds$. Moreover it is known that $b_t$ is independent of $B_{t}$ \cite[Chapter 6 p. 470]{IkedaWatanabe1989}. Hence we have that
\begin{align*}
&\Prob \left( \max_{0\leqslant t \leqslant 1} \vert b_{\tau(t)} \vert  >\varepsilon^2 -\delta C_\phi - \delta^2 \vert , \;  \max_{0\leqslant t \leqslant 1} \vert B_t\vert_{\R^2} < \delta \right)
\\
& \leqslant  \Prob \left( \max_{0 \leqslant t \leqslant \frac{1}{4} \delta^2 } \vert b_{t} \vert  >\varepsilon^2 -\delta C_\phi - \delta^2 \, \vert , \; \max_{0\leqslant t \leqslant 1} \vert B_t\vert_{\R^2} < \delta  \right) 
\\
& = \Prob \left( \max_{0 \leqslant t \leqslant \frac{1}{4} \delta^2} \vert b_{t} \vert  >\varepsilon^2 -\delta C_\phi - \delta^2  \right) =  \Prob \left( \max_{0\leqslant t \leqslant 1} \vert b_{t} \vert  >2 (\frac{\varepsilon^2}{\delta}- C_\phi-\delta)  \right),
\end{align*}
which goes to zero as $\delta$ goes to zero.
\end{proof}

\begin{corollary} \label{c.reverse.inclusion}
$\overline{H\left( \Hei\right) } \subset \mathcal{S}_{\mu}. $

\end{corollary}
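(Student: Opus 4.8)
The plan is to invoke the standard topological characterization of the support of a Borel probability measure on a metric space: a point $\phi$ lies in $\mathcal{S}_{\mu}$ if and only if every open ball centered at $\phi$ has strictly positive $\mu$-measure. Since $\mathcal{S}_{\mu}$ is closed and, as noted in the excerpt, $\overline{H\left( \Hei\right) }^{\Vert \cdot \Vert_{\WH} } = \overline{H_p\left( \Hei\right) }^{\Vert \cdot \Vert_{\WH} }$, it suffices to prove the inclusion $H_p\left( \Hei\right) \subset \mathcal{S}_{\mu}$; taking closures then yields the corollary for free, because $\mathcal{S}_\mu$ is already closed. So I would fix $\phi \in H_p\left( \Hei\right)$ and an arbitrary $\varepsilon>0$, and reduce the whole problem to verifying
\[
\mu\left( \left\{ \eta \in \WH : d_{\WH}(\eta, \phi) < \varepsilon \right\} \right) = \Prob\left( d_{\WH}(g, \phi) < \varepsilon \right) > 0.
\]

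To extract this positivity from Theorem \ref{thm.estimate}, I would condition on the event $E_{\delta, \phi}$ and use the elementary bound
\[
\Prob\left( d_{\WH}(g, \phi) < \varepsilon \right) \geqslant \Prob\left( d_{\WH}(g, \phi) < \varepsilon \,\vert\, E_{\delta, \phi} \right)\, \Prob\left( E_{\delta, \phi} \right).
\]
For the first factor, applying Theorem \ref{thm.estimate} with $\varepsilon/2$ in place of $\varepsilon$ gives $\lim_{\delta \to 0} \Prob\left( d_{\WH}(g, \phi) > \varepsilon/2 \,\vert\, E_{\delta, \phi} \right) = 0$, so for all sufficiently small $\delta$ this conditional probability is strictly less than $1$; consequently $\Prob\left( d_{\WH}(g, \phi) \leqslant \varepsilon/2 \,\vert\, E_{\delta, \phi} \right) > 0$, and hence $\Prob\left( d_{\WH}(g, \phi) < \varepsilon \,\vert\, E_{\delta, \phi} \right) > 0$ for every such $\delta$.

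The remaining, and really the only delicate, point is the positivity of the second factor $\Prob\left( E_{\delta, \phi} \right)$ for these small $\delta$. This is exactly where I would reuse the computation carried out inside the proof of Theorem \ref{thm.estimate}: via Cameron--Martin--Girsanov one has $\Prob\left( E_{\delta, \phi} \right) = \E\left[ \mathcal{E}^\phi \,\vert\, E_\delta \right]\,\Prob\left( E_\delta \right)$, where $E_\delta = \left\{ \sup_{0\leqslant t \leqslant 1} \vert B_t \vert_{\R^2} < \delta \right\}$. The small-ball probability $\Prob\left( E_\delta \right)$ is strictly positive for every $\delta>0$, and since the exponential martingale satisfies $\mathcal{E}^\phi > 0$ almost surely, the conditional expectation $\E\left[ \mathcal{E}^\phi \,\vert\, E_\delta \right]$ is strictly positive as well; therefore $\Prob\left( E_{\delta, \phi} \right) > 0$. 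Combining the two positive factors shows every $\varepsilon$-ball around $\phi$ carries positive $\mu$-mass, so $\phi \in \mathcal{S}_{\mu}$. As $\phi \in H_p\left( \Hei\right)$ and $\varepsilon$ were arbitrary, we conclude $H_p\left( \Hei\right) \subset \mathcal{S}_{\mu}$, and passing to the closure gives $\overline{H\left( \Hei\right) }^{\Vert \cdot \Vert_{\WH} } \subset \mathcal{S}_{\mu}$.
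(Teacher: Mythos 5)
Your proposal is correct and follows essentially the same route as the paper's own proof: both reduce the claim to showing that every $\Vert \cdot \Vert_{\WH}$-ball around a curve $\phi \in H_p\left(\Hei\right)$ has positive $\mu$-measure, bound that measure below by $\Prob\left( d_{\WH}(g,\phi) < \varepsilon \,\vert\, E_{\delta,\phi}\right)\Prob\left(E_{\delta,\phi}\right)$, control the conditional factor via Theorem \ref{thm.estimate}, and obtain positivity of $\Prob\left(E_{\delta,\phi}\right)$ from the Cameron--Martin/Girsanov theorem before passing to the closure. Your application of the theorem with $\varepsilon/2$ to convert the strict-inequality tail bound into positivity of the open-ball conditional probability is a slightly more careful handling of a point the paper states without comment, but it is the same argument.
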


\begin{proof}
Let us first prove that for any $\phi \in H_p \left( \Hei\right) $ and $\varepsilon >0$ we have that  $ \mu \left( B_\varepsilon (\phi ) \right) >0$, where
\[
B_\varepsilon (\phi ):= \left\{ \gamma \in \WH, \; \max_{0 \leqslant t \leqslant 1} \vert \phi^{-1} (t) \gamma (t) \vert <\varepsilon  \right\}
\]
Indeed, for any $\phi \in H_p \left( \Hei\right) $ and $\varepsilon >0$ we have  that

\begin{align*}
&\mu \left( B_\varepsilon (\phi ) \right):= \Prob \left( g \in B_\varepsilon (\phi ) \right) = \Prob \left( \max_{0 \leqslant t \leqslant 1} \vert \phi (t)^{-1} g_{t} \vert < \varepsilon \right) 
\\
& \geqslant \Prob \left( \max_{0 \leqslant t \leqslant 1} \vert \phi (t)^{-1} g_{t} \vert  < \varepsilon \, \vert \,E_{\delta, \phi} \right) \Prob \left( E_{\delta, \phi }\right),
\end{align*}
where $E_{\delta, \phi} := \left\{ \max_{0\leqslant t \leqslant 1} \vert B_t - \pi ( \phi) (t) \vert_{\R^2} < \delta \right\}.$ From Theorem \eqref{thm.estimate} there exists a $\delta_0$ such that for every $\delta\in (0,\delta_0)$
\[ 
\Prob \left( \max_{0 \leqslant t \leqslant 1} \vert \phi (t)^{-1} g_{t} \vert  < \varepsilon \, \vert \, E_{\delta, \phi} \right)  \geqslant \frac{1}{2},
\]
for any $\varepsilon>0$. Combining everything together we have that
\[
\mu \left( B_\varepsilon (\phi ) \right) \geqslant \frac{1}{2} \Prob \left( \sup_{0\leqslant t \leqslant 1} \vert B_t - \pi ( \phi ) (t) \vert_{\R^2}  < \frac{\delta_0}{2} \right) ,
\]
and the latter is positive since $\pi (\phi )$ is in the Cameron-Martin space over $\R^{2}$.
Therefore, if $O$ is any open set in $\WH$ with $\mu (O)=0$ then $O \subset H_p \left( \Hei \right) ^c $, and hence 
\begin{align*}
& \bigcup_{ \mathclap{ \substack{     O \, \text{open}  \\ \mu(O)=0          }}} O \subset    H_p \left( \Hei \right) ^c , \; \; \text{that is, } \; \; \mathcal{S}_{\mu} := \bigcap_{ \mathclap{ \substack{     F \, \text{closed}  \\ \mu(F)=1          }} } F \supset  H_p \left( \Hei \right) , 
\end{align*}
and since $S_\mu$ is closed, we have that   $\mathcal{S}_{\mu} \supset \overline{H_p\left( \Hei\right) }=\overline{H\left( \Hei\right) } $.
\end{proof}

The proof of Theorem \ref{thm.estimate} will be completed once we show \eqref{e.claim}. Before proceeding to the proof of \eqref{e.claim}, we need the following lemma whose proof can be found in \cite[ pp. 536-537]{IkedaWatanabe1989}.

\begin{lemma}[pp. 536-537 in \cite{IkedaWatanabe1989}]\label{Lemma3.1}
Let $I_1, \ldots, I_n$ be $n$ random variables on a probability space $\left( \Omega, \mathcal{F}, \Prob \right)$. Let $\left\{  A_\delta \right\}_{0<\delta <1} $ be a family of events in $\mathcal{F}$ and $a_1, \ldots , a_n$ be $n$ numbers. If for every real number $c$ and every $1\leqslant i \leqslant n$
\[
\limsup_{\delta \rightarrow 0} \E \left[ \exp (c\, I_i)\, |  A_\delta   \right] \leqslant \exp (c\,a_i),
\]
then
\[
\lim_{\delta\rightarrow 0} \E \left[ \exp\left(\sum_{i=1}^n I_i\right)  |  A_\delta  \right] = \exp \left( \sum_{i=1}^n a_i \right).
\]
\end{lemma}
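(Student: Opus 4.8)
The plan is to pass to the conditional probability measures $\Prob_\delta(\cdot) := \Prob(\cdot \mid A_\delta)$, under which $\E[\,\cdot \mid A_\delta]$ is an honest expectation, and to trap $\E[\exp(\sum_{i=1}^n I_i) \mid A_\delta]$ between matching $\limsup$ and $\liminf$ bounds, both equal to $\exp(\sum_{i=1}^n a_i)$. The hypothesis is assumed for \emph{every} real $c$ precisely because the upper bound will use a large positive $c$ and the lower bound a negative $c$.

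For the upper bound I would apply the generalized H\"older inequality with all exponents equal to $n$,
\[
\E\left[\exp\left(\sum_{i=1}^n I_i\right) \,\middle|\, A_\delta\right] \leqslant \prod_{i=1}^n \E\left[\exp(n I_i) \mid A_\delta\right]^{\frac{1}{n}}.
\]
Each factor on the right is nonnegative, so the $\limsup$ of the product is at most the product of the $\limsup$'s; invoking the hypothesis with $c = n$ then gives
\[
\limsup_{\delta \to 0} \E\left[\exp\left(\sum_{i=1}^n I_i\right) \,\middle|\, A_\delta\right] \leqslant \prod_{i=1}^n \left(e^{n a_i}\right)^{\frac{1}{n}} = \exp\left(\sum_{i=1}^n a_i\right).
\]

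For the lower bound the key point is that the hypothesis, used for negative $c$, forces $\liminf_{\delta \to 0} \E[I_i \mid A_\delta] \geqslant a_i$. Indeed, fixing $c < 0$, Jensen's inequality yields $\E[\exp(c I_i) \mid A_\delta] \geqslant \exp(c\,\E[I_i \mid A_\delta])$, hence $\E[I_i \mid A_\delta] \geqslant \frac{1}{c}\log \E[\exp(c I_i)\mid A_\delta]$; taking $\liminf_{\delta \to 0}$ and using that $\frac{1}{c} < 0$ converts the hypothesis $\limsup_\delta \E[\exp(cI_i)\mid A_\delta] \leqslant e^{c a_i}$ into $\liminf_\delta \E[I_i \mid A_\delta] \geqslant \frac{1}{c}(c a_i) = a_i$. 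Summing over $i$ and applying Jensen once more to $\exp(\sum_i I_i)$ gives $\liminf_{\delta \to 0}\E[\exp(\sum_i I_i)\mid A_\delta] \geqslant \exp(\sum_i a_i)$, which together with the upper bound forces the limit to exist and equal $\exp(\sum_i a_i)$.

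I expect the main subtlety, rather than a genuine obstacle, to be bookkeeping: verifying that $\E[I_i \mid A_\delta]$ is finite (which follows from finiteness of the two-sided exponential moments guaranteed by the hypothesis for small $\delta$) and justifying the interchange of $\limsup$/$\liminf$ with products, logarithms, and exponentials via their monotonicity and continuity on the relevant ranges. An alternative route through convergence of $I_i$ to $a_i$ in $\Prob_\delta$-probability plus uniform integrability would also work, but the H\"older--Jensen bracketing above avoids invoking uniform integrability altogether.
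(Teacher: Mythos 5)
Your proof is correct. For context: the paper does not prove this lemma at all --- it is quoted from Ikeda--Watanabe, pp.~536--537, so the only comparison available is with that classical argument. The standard proof there runs \emph{both} bounds through H\"older's inequality: the upper bound exactly as you do it, and the lower bound via the duality trick
\[
1 = \E\left[ e^{S/p}\, e^{-S/p} \mid A_\delta \right] \leqslant \E\left[ e^{S} \mid A_\delta \right]^{1/p} \E\left[ e^{-(q/p)S} \mid A_\delta \right]^{1/q}, \qquad S=\sum_{i=1}^n I_i,\ \tfrac{1}{p}+\tfrac{1}{q}=1,
\]
followed by generalized H\"older on the second factor and the hypothesis with $c=-qn/p$. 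Your lower bound is genuinely different: you use the hypothesis at a single negative $c$ together with Jensen to extract $\liminf_{\delta\rightarrow 0}\E\left[ I_i \mid A_\delta\right] \geqslant a_i$, and then Jensen again to return to the exponential. This is slightly more informative (it shows the conditional means themselves are asymptotically bounded below by the $a_i$, not just the exponential moments), at the cost of the integrability bookkeeping you flag; that bookkeeping does close as you say, since the hypothesis at $c=\pm 1$ gives, for small $\delta$, $\E\left[ \vert I_i\vert \mid A_\delta\right] \leqslant \E\left[ e^{I_i}+e^{-I_i} \mid A_\delta\right] < \infty$, so Jensen applies under the honest probability measure $\Prob\left( \cdot \mid A_\delta \right)$. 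The only other point worth stating explicitly is that your interchange of $\limsup$ with the finite product in the upper bound is valid because all factors are nonnegative with finite $\limsup$s. Either route is a complete and correct proof.
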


\begin{lemma}\label{l.claim}
Let $E_\delta$ and $F^\varepsilon_{\delta, \phi} $ be given as in the proof of Theorem \ref{thm.estimate}.  Then 
\[
\lim_{\delta \rightarrow 0} \frac{\E \left[ \mathcal{E}^\phi \, \vert \, F_{\delta, \phi}^\varepsilon \cap E_{\delta}  \right]}{\E \left[ \mathcal{E}^\phi \, \vert \,   E_{\delta}\right]}=1.
\]
\end{lemma}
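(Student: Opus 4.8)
The plan is to show that both conditional expectations in the ratio converge, as $\delta \to 0$, to the \emph{same} deterministic constant, so that their quotient tends to $1$. The starting point is to split the exponent of the Girsanov density into its random and deterministic parts: writing
\[
\mathcal{E}^\phi = \exp\left( I_1 + I_2 \right), \qquad I_1 := -\int_0^1 \langle \bm{\phi}^\prime(s), dB_s\rangle_{\R^2}, \quad I_2 := -\frac{1}{2}\int_0^1 \vert \bm{\phi}^\prime(s)\vert_{\R^2}^2\, ds,
\]
where $I_2$ is a deterministic constant and $I_1$ is the only random term.

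The key estimate, and the step where the regularity $\phi \in H_p(\Hei)$ is essential, is to control $I_1$ on the event $E_\delta$. Since $\bm{\phi}$ is piecewise twice continuously differentiable, $\bm{\phi}^\prime$ has bounded variation on $[0,1]$, so on each smooth piece I can integrate by parts the Itô integral with deterministic integrand (there is no Itô correction, because $\bm{\phi}^\prime$ is deterministic and of bounded variation) to obtain
\[
I_1 = -\langle \bm{\phi}^\prime(1), B_1\rangle_{\R^2} + \int_0^1 \langle B_s, d\bm{\phi}^\prime(s)\rangle_{\R^2}.
\]
On $E_\delta = \{\sup_{0\leqslant t\leqslant 1}\vert B_t\vert_{\R^2} < \delta\}$ this gives the pointwise bound
\[
\vert I_1\vert \leqslant \delta\left(\vert \bm{\phi}^\prime(1)\vert_{\R^2} + \mathrm{Var}_{[0,1]}(\bm{\phi}^\prime)\right) =: K_\phi\,\delta,
\]
a constant multiple of $\delta$, holding uniformly on $E_\delta$.

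With this estimate the conclusion follows from Lemma \ref{Lemma3.1}. For $A_\delta$ equal to either $E_\delta$ or $F_{\delta,\phi}^\varepsilon \cap E_\delta \subset E_\delta$, the deterministic term gives $\E[\exp(cI_2)\,\vert\,A_\delta] = \exp(cI_2)$, so its contribution is $a = I_2$; for the stochastic term the bound above gives $\E[\exp(cI_1)\,\vert\,A_\delta] \leqslant \exp(\vert c\vert K_\phi\delta) \to 1$, so $\limsup_{\delta\to 0}\E[\exp(cI_1)\,\vert\,A_\delta] \leqslant e^{c\cdot 0}$ and its contribution is $a = 0$. Lemma \ref{Lemma3.1} then yields $\lim_{\delta\to 0}\E[\mathcal{E}^\phi\,\vert\,A_\delta] = \exp(I_2)$ for both choices of $A_\delta$, and since $\exp(I_2)>0$ the ratio of the two limits is $1$. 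One can in fact bypass Lemma \ref{Lemma3.1} by a direct sandwich: on $E_\delta$ the estimate gives $e^{-K_\phi\delta}e^{I_2} \leqslant \mathcal{E}^\phi \leqslant e^{K_\phi\delta}e^{I_2}$, hence $\E[\mathcal{E}^\phi\,\vert\,A_\delta]\in[e^{-K_\phi\delta}e^{I_2},\,e^{K_\phi\delta}e^{I_2}]$ for every $A_\delta\subset E_\delta$ of positive probability, forcing the ratio into $[e^{-2K_\phi\delta},\,e^{2K_\phi\delta}]$.

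The main obstacle is the integration-by-parts estimate: it is precisely the bounded-variation regularity of $\bm{\phi}^\prime$ afforded by $\phi \in H_p(\Hei)$ that renders $I_1$ uniformly $O(\delta)$ on $E_\delta$, which is why the approximation is carried out within the piecewise-$C^2$ class $H_p(\Hei)$ rather than all of $H(\Hei)$. A secondary point to check is that $F_{\delta,\phi}^\varepsilon \cap E_\delta$ has positive probability for each small $\delta$, so that the conditional expectations are well defined; this holds because the Lévy area can exceed any fixed level while the underlying Brownian path remains in an arbitrarily small ball (by spiralling).
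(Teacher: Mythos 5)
Your proof is correct and follows essentially the same route as the paper's: the same splitting of the Girsanov exponent into its deterministic part $I_2$ and the stochastic integral $I_1$, the same integration by parts (valid since $\bm{\phi}^\prime$ has bounded variation for $\phi \in H_p\left(\Hei\right)$) giving the uniform bound $\vert I_1 \vert \leqslant K_\phi \delta$ on $E_\delta$, and the same use of Lemma \ref{Lemma3.1} to identify the limit of both conditional expectations as $\exp\left( -\frac{1}{2}\int_0^1 \vert \bm{\phi}^\prime (s)\vert^2_{\R^2}\, ds\right)$. Two of your additions improve on the paper's write-up: the direct sandwich $e^{-K_\phi\delta}e^{I_2} \leqslant \mathcal{E}^\phi \leqslant e^{K_\phi\delta}e^{I_2}$ on $E_\delta$ squeezes the ratio into $\left[ e^{-2K_\phi\delta}, e^{2K_\phi\delta}\right]$ for any positive-probability subsets of $E_\delta$ and so makes Lemma \ref{Lemma3.1} dispensable (it is also more careful about the sign of $c$ than the paper's stated bound $\exp(-ck_\phi\delta)$, where $\exp(\vert c \vert K_\phi \delta)$ is the correct upper bound); and your check that $\Prob\left( F^\varepsilon_{\delta,\phi}\cap E_\delta\right) > 0$, by winding inside the $\delta$-ball, settles the well-definedness of the conditional expectations, a point the paper leaves implicit.
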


\begin{proof}
Let us first prove that 
\begin{equation}\label{e.idk3}
 \lim_{\delta \rightarrow 0}  \E \left[ \mathcal{E}^\phi \, \vert \, E_\delta \right] = \exp\left(   - \frac{1}{2} \int_0^1 \vert \pi (\phi)^{\prime} (s) \vert^2_{\R^2} ds\right).
\end{equation}
Since $\mathcal{E}^{\phi}= \exp \left( -\int_0^1 \langle \pi (\phi)^{\prime} (s), dB_{s} \rangle_{\R^2} ds  - \frac{1}{2} \int_0^1 \vert \pi(\phi)^{\prime} (s) \vert^2_{\R^2} ds \right)$, by Lemma \ref{Lemma3.1} and the definition of $E_\delta$, it is enough to show that for any real number $c$ and $i=1,2$ 
\[
\limsup_{\delta \rightarrow 0} \E \left[ \exp\left( - c\int_0^1 \phi^\prime_i (s) dB_i(s) \right) \, \left| \, \max_{0
\leqslant t \leqslant 1} \vert B_t \vert_{\R^2} < \delta \right]  \leqslant 1.  \right.
\]  
For $\phi \in H_p \left( \Hei \right)$ we can write $\int_0^1 \phi^\prime_i (s) dB_i(s) = \phi^\prime_i (1) B_i(1) - \int_0^1 \phi^{\prime \prime}_i  (s) B_i(s)ds$, and hence on the event $E_\delta$ we have that 
\begin{align*}
& \exp\left( - c\int_0^1 \phi^\prime_i (s) dB_i(s) \right)  \leqslant \exp\left( - ck_\phi \delta \right),
\end{align*}
for some finite constant $k_\phi$ only depending on $\phi$. Therefore we have that 
\begin{align*}
& \limsup_{\delta \rightarrow 0} \E \left[ \exp\left( - c\int_0^1 \phi^\prime_i (s) dB_i(s) \right) \, \left| \, \max_{0
\leqslant t \leqslant 1} \vert B_t \vert_{\R^2} < \delta \right] \leqslant \E \left[ \limsup_{\delta \rightarrow 0}  e^{- ck_\phi \delta}  \vert E_\delta \right]  \leqslant 1.  \right.
\end{align*}
In a similar way it can be shown that 
\[
\lim_{\delta \rightarrow 0}  \E \left[ \mathcal{E}^\phi \, \vert \,  F_{\delta, \phi}^\varepsilon \cap E_{\delta} \right] = \exp\left(   - \frac{1}{2} \int_0^1 \vert \pi (\phi)^{\prime} (s) \vert^2_{\R^2} ds\right),
\]
and the proof is completed.
\end{proof}
The following Proposition completes the proof of Theorem \ref{Thm.Support}.
\begin{proposition}\label{p.closure }
We have that $\overline{H\left( \Hei\right) }= \WH$.
\end{proposition}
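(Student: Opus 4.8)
The plan is to prove density directly: given an arbitrary $\eta = \left( \bm{\eta}, \eta_3 \right) \in \WH$ and $\varepsilon > 0$, I will construct a finite energy horizontal curve $\gamma \in H\left( \Hei \right)$ with $\Vert \eta^{-1}\gamma \Vert_{\WH} = \max_{0\leqslant t \leqslant 1} \vert \eta(t)^{-1}\gamma(t) \vert < \varepsilon$. The only substantial external input is the Chow--Rashevskii theorem, which is already invoked above to guarantee that $d_{cc}$ is finite on $\Hei$, together with the group structure through the left-invariance of the relevant distances.

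First I would record the distance comparison to be used. Since $d_{cc}$ is left-invariant we have $d_{cc}(g_1,g_2) = d_{cc}(e, g_1^{-1}g_2)$, and evaluating \eqref{e.DistEquivalence} with $g_1 = e$ gives $c\,\vert h\vert \leqslant d_{cc}(e,h) \leqslant C\,\vert h\vert$ for every $h$, where $\vert h \vert = \rho(h,e)$. Combining the two yields
\[
c\,\vert g_1^{-1}g_2\vert \leqslant d_{cc}(g_1,g_2) \leqslant C\,\vert g_1^{-1}g_2\vert .
\]
Hence it suffices to make $\max_t d_{cc}\left( \eta(t), \gamma(t) \right)$ small. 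I would also note that $\eta : [0,1] \to \Hei$ is uniformly continuous for the homogeneous-norm topology, hence for $d_{cc}$ by the equivalence above.

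Now the construction. Fix $\varepsilon' > 0$ to be tuned and, using uniform continuity, choose $N$ so large that the uniform partition $t_k = k/N$ satisfies $d_{cc}\left( \eta(s), \eta(t) \right) < \varepsilon'$ whenever $\vert s - t \vert \leqslant 1/N$. On each interval $[t_k, t_{k+1}]$ define $\gamma$ to be a smooth horizontal curve joining $\eta(t_k)$ to $\eta(t_{k+1})$ of $d_{cc}$-length at most $2\, d_{cc}\left( \eta(t_k), \eta(t_{k+1}) \right) < 2\varepsilon'$; such a curve exists by Chow--Rashevskii (one may take a length-minimizing geodesic, whose projection is a circular arc or a segment). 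The segments agree with $\eta$ at the nodes $t_k$, so their concatenation is a continuous horizontal curve with $\gamma(0) = \eta(0) = e$, and being piecewise smooth on $[0,1]$ it has $c_\gamma$ bounded, so $\int_0^1 \vert c_\gamma(s) \vert^2_{\mathcal{H}_e}\,ds < \infty$ and $\gamma \in H\left( \Hei \right)$ (indeed $\gamma \in H_p\left( \Hei \right)$). For $t \in [t_k, t_{k+1}]$, every point of the $k$-th segment lies within $d_{cc}$-distance $2\varepsilon'$ of its initial point $\gamma(t_k) = \eta(t_k)$, since a curve of length $L$ stays within $d_{cc}$-distance $L$ of its start. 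By the triangle inequality
\[
d_{cc}\left( \eta(t), \gamma(t) \right) \leqslant d_{cc}\left( \eta(t), \eta(t_k) \right) + d_{cc}\left( \eta(t_k), \gamma(t) \right) < \varepsilon' + 2\varepsilon' = 3\varepsilon',
\]
so $\vert \eta(t)^{-1}\gamma(t) \vert \leqslant c^{-1} d_{cc}\left( \eta(t), \gamma(t) \right) < 3\varepsilon'/c$. Choosing $\varepsilon' = c\varepsilon/3$ gives $\Vert \eta^{-1}\gamma \Vert_{\WH} < \varepsilon$, and since $\eta$ and $\varepsilon$ were arbitrary this proves $\overline{H\left( \Hei\right) }^{\Vert \cdot \Vert_{\WH} } = \WH$.

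I expect the main obstacle to be the control of the vertical coordinate. Matching the projection $\bm{\eta}$ is immediate, but the third coordinate of any horizontal curve is forced to equal the swept signed area $\frac{1}{2}\int_0^t \omega(\bm{\gamma}, \bm{\gamma}^\prime)\,ds$, so one cannot prescribe it pointwise by naive interpolation. The device that resolves this is to pin $\gamma$ to $\eta$ exactly at the partition nodes and to route it between consecutive nodes along \emph{short} horizontal geodesics, so that the accumulated area error is controlled automatically by the (small) $d_{cc}$-length of each segment rather than imposed by hand; the distance equivalence \eqref{e.DistEquivalence}, promoted to a left-invariant statement, then converts this Carnot--Carath\'{e}odory bound into the desired bound for the homogeneous norm $\vert\cdot\vert$.
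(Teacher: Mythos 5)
Your argument is correct, but it takes a genuinely different route from the paper's. The paper never touches $d_{cc}$ or the equivalence \eqref{e.DistEquivalence}: it first reduces to piecewise linear curves, and then, for each linear curve $\xi(t)=(a_1t,a_2t,a_3t)$, writes down explicit finite-energy horizontal helices $\phi_n$ that shrink onto $\xi$, verifying $d_{\WH}(\phi_n,\xi)\to 0$ by computing $\phi_n^{-1}\xi$ directly with the group law. You instead run a soft metric argument: uniform continuity of $\eta$, a fine partition, near-minimizing horizontal connectors between the nodes $\eta(t_k)$ (available because $d_{cc}$ is finite by Chow--Rashevskii), the fact that a curve stays within its length of its starting point, and left-invariance of $d_{cc}$ combined with \eqref{e.DistEquivalence} to convert the resulting bound into one for the homogeneous norm $\vert \eta(t)^{-1}\gamma(t)\vert$. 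Your route buys generality: it works verbatim on any Carnot group (or any sub-Riemannian Lie group whose control distance is finite and comparable to the chosen left-invariant norm), where explicit helix formulas are unavailable, and it isolates the key geometric point that the vertical error is controlled automatically by the length of a short horizontal connector. The paper's route buys self-containedness: it needs no near-minimizers, no uniform-continuity bookkeeping, and no equivalence constants, only the group law, and it displays concretely how horizontal motion sweeps out vertical displacement. Two small remarks: your parenthetical claim that the connectors lie in $H_p\left(\Hei\right)$ would require smoothness of sub-Riemannian geodesics, which you do not actually need, since constant-speed near-minimizers already give $\gamma\in H\left(\Hei\right)$; and you use the standard length definition of $d_{cc}$ (triangle inequality, subcurve bounds), which is the intended reading of Definition \ref{Dfn.2.2} even though it is written there with the energy rather than the length.
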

\begin{proof}
Any element in $\WH$ can be approximated with piecewise linear curves in the  uniform topology. It is then enough to prove that for any piecewise linear curve $\xi$ there exists a sequence of horizontal finite energy curves $\left\{ \phi^\xi_n \right\}_{n\in \mathbb{N}}$ such that $ d_{\WH} \left( \phi^\xi_n,\xi \right) \rightarrow 0 $. Let us first explain the geometric construction through the following example. Consider the curve  $ t \rightarrow \xi (t) = (0,0,t) \in \Hei$ for $t\in[0,1]$, which is the prototype of a non-horizontal curve. Let us define a family of finite energy horizontal curves $\phi_n$ by 
\[
\phi_n (s) := \left( \frac{2}{n} \cos \left( n^2 s \right), \frac{1}{n} \sin \left( n^2 s \right), s \right).
\]
Geometrically, the curves $\phi_n$ are helics that shrink around the $\xi$ as $n$ goes to infinity. Indeed, 
\begin{align*}
& d_{\WH} \left( \phi_n, \xi \right) ^4 = \max_{0\leqslant s \leqslant 1 } \left[ \left( \left( \frac{2}{n} \cos \left( n^2 s \right)\right)^2 + \left(\frac{1}{n} \sin \left( n^2 s \right) \right)^2 \right)^2   \right.
\\
&\left. + \left(  s- \frac{1}{2} \int_0^t  \omega \left( \pi (\phi)_{n} (u), \pi (\phi)^{\prime}_{n} (u) \right)du \right)^2    \right]
\\
& = \max_{0\leqslant s \leqslant 1 }  \left( \left( \frac{2}{n} \cos \left( n^2 s \right)\right)^2 + \left(\frac{1}{n} \sin \left( n^2 s \right) \right)^2 \right)^2  \longrightarrow 0,
\end{align*}
as $n$ goes to infinity.  Now, let $\xi (t) = ( a_1 t, a_2 t, a_3 t)$ be a linear curve in $\Hei$, where $a_1, \, a_2, \, a_3 \in \R$. Then set
\begin{align*}
 &\phi_n (s) := \left( a_1 s + \frac{2}{n} \cos\left( n^2 a_3 s \right), a_2 s + \frac{1}{n} \sin\left( n^2 a_3 s \right), \right.
\\
& \left.  a_3 s - \frac{a_2s}{n} \cos \left( n^2 a_3 s \right) + \frac{a_1s}{2n} \sin \left( n^2a_3 s\right) + \frac{1}{n} \int_0^s  2a_2 \cos\left(n^2a_3u\right) - a_1 \sin\left(n^2a_3u\right) du \right).
\end{align*}
It is easy to check that for any $ n \in \mathbb{N}$, $\phi_n$ is a finite energy horizontal curve such that 
\begin{align*}
& (\phi_n^{-1} \xi )(s) 
\\
& = \left( -\frac{2}{n} \cos\left( n^2 a_3 s \right) , - \frac{1}{n} \sin \left( n^2 a_3 s \right),   \frac{1}{n} \int_0^s a_1 \sin \left( n^2a_3u\right) - 2a_2 \cos \left( n^2a_3 u \right) du \right),
\end{align*}
which implies that $d_{\WH}  \left(\phi_n , \xi \right) \rightarrow 0 $ as $n$ goes to infinity.
\end{proof}

\begin{acknowledgement}
The author wishes to thank M. Gordina and an anonymous referee for carefully reading the manuscript and suggesting significant improvements.
\end{acknowledgement}

\providecommand{\bysame}{\leavevmode\hbox to3em{\hrulefill}\thinspace}
\providecommand{\MR}{\relax\ifhmode\unskip\space\fi MR }
\providecommand{\MRhref}[2]{%
  \href{http://www.ams.org/mathscinet-getitem?mr=#1}{#2}
}
\providecommand{\href}[2]{#2}

\end{document}